\documentclass[11pt,a4paper]{amsart}
\usepackage[english]{babel}
\usepackage{amssymb,xspace}
\usepackage{amstext}
\theoremstyle{plain}
\usepackage{amsbsy,amssymb,amsfonts,latexsym}

\selectlanguage{english}

\marginparwidth=10 true mm
\oddsidemargin=0 true mm
\evensidemargin=0 true mm
\marginparsep=5 true mm
\topmargin=0 true mm
\headheight=8 true mm
\headsep=4 true mm
\topskip=0 true mm
\footskip=15 true mm

\setlength{\textwidth}{150 true mm}
\setlength{\textheight}{220 true mm}
\setlength{\hoffset}{8 true mm}
\setlength{\voffset}{2 true mm}

\parindent=0 true mm

\raggedbottom
\usepackage{enumerate}
\usepackage{graphics}


\title{A new class of frequently hypercyclic operators}

\author{Sophie Grivaux}
\address{
Laboratoire Paul Painlev\' e, UMR 8524, Universit\'e  Lille 1, Cit\' e Scientifique, 59655 Villeneuve d'Ascq
Cedex, France}
\email{grivaux@math.univ-lille1.fr}

\subjclass{47A16, 37A05, 47A35, 46B09, 46B15}
\keywords{Linear dynamical systems, hypercyclic and frequently hypercyclic operators, 
measure-preserving and ergodic transformations}
\thanks{This work was partially supported by ANR-Projet Blanc DYNOP}

\def\T{\ensuremath{\mathbb T}}

\def\E{\ensuremath{\mathbb E}}

\def\C{\ensuremath{\mathbb C}}
\def\Q{\ensuremath{\mathbb Q}}
\def\N{\ensuremath{\mathbb N}}
\def\P{\ensuremath{\mathbb P}}

\newcommand{\sep}{separable}

\newcommand{\hy}{hypercyclic}
\newcommand{\fhy}{frequently hypercyclic}
\newcommand{\ops}{operators}
\newcommand{\op}{operator}
\newcommand{\st}{Steinhaus}
\newcommand{\ind}{independent}

\newcommand{\erg}{ergodic}
\newcommand{\eve}{eigenvector}
\newcommand{\eva}{eigenvalue}
\newcommand{\ps}{perfectly spanning set of eigenvectors associated to unimodular
eigenvalues}
\newcommand{\wrt}{with respect to}
\newcommand{\bs}{backward shift}

\newcommand{\ga}{Gaussian}

\newcommand{\inv}{invariant}

\newcommand{\mea}{measure}
\newcommand{\nd}{non-degenerate}

\newcommand{\wmx}{weak-mixing}

\newcommand{\HC}{Hypercyclicity Criterion}
\newcommand{\ifff}{if and only if}

\newcommand{\pss}[2]{\ensuremath{{\langle #1,#2\rangle}}}


\newtheorem{theorem}{Theorem}[section]

\newtheorem{lemma}[theorem]{Lemma}

\newtheorem{proposition}[theorem]{Proposition}

{\theoremstyle{definition}}

{\theoremstyle{definition}}

{\theoremstyle{definition}\newtheorem{definition}[theorem]{Definition}}

{\theoremstyle{definition}}

{\theoremstyle{definition}\newtheorem{remark}[theorem]{Remark}}

\newtheorem{question}[theorem]{Question}

{\theoremstyle{definition}\newtheorem*{FFC Criterion}{Frequent
Faber-hypercyclicity Criterion}}
\newtheorem*{Hypercyclicity Criterion}{Hypercyclicity Criterion}
{\theoremstyle{definition}\newtheorem*{GS Criterion}{Godefroy-Shapiro
Criterion}}
\def\piednote#1{\let\oldfn=\thefootnote\def\thefootnote{}\footnote{\noindent#1}%
\addtocounter{footnote}{-1}\def\thefootnote{\oldfn}}

\begin{document}

\begin{abstract}
We study in this paper a hypercyclicity property of linear dynamical systems: a bounded linear operator $T$ acting on a separable infinite-dimensional Banach space $X$ is said to be \emph{hypercyclic} if there exists a vector $x\in X$ such that $\{T^{n}x \textrm{ ; } n\geq 0\}$ is dense in $X$, and \emph{frequently hypercyclic} if there exists $x\in X$ such that for any non empty open subset $U$ of $X$, the set $\{n\geq 0 \textrm{ ; } T^n x\in U\}$ has positive lower density. We prove in this paper that if $T\in\mathcal{B}(X)$ is an operator which has ``sufficiently many'' eigenvectors associated to eigenvalues of modulus $1$ in the sense that these eigenvectors are perfectly spanning, then $T$ is automatically frequently hypercyclic.
\end{abstract}
\maketitle

\section{Introduction}
Let $X$ be a complex infinite-dimensional \sep\ Banach space, and $T$ a bounded linear \op\ on $X$. We are concerned in this paper with the dynamics of the \op\ $T$, i.e. with the behaviour of the orbits $\mathcal{O}\textrm{rb}(x,T)=\{T^{n}x \textrm{ ; } n\geq 0\}$, $x\in X$, of the vectors of $X$ under the action of $T$. Our main interest here will be in strong forms of hypercyclicity: recall that a vector $x\in X$ is said to be \emph{\hy}\ for $T$ if its orbit under the action of $T$ is dense in $X$. In this case the \op\ $T$ itself is said to be \hy. This notion of \hy ity as well as related matters in linear dynamics have been intensively studied in the past years. We refer the reader to the recent book \cite{BM} for more information on these topics.
\par\smallskip
Our starting point for this work are the papers \cite{BG1}, \cite{BG2} and \cite{BG3}, which study the role of the unimodular point spectrum in linear dynamics. By unimodular point spectrum of the \op\ $T$, we mean the set of \eva s of $T$ which are of modulus $1$.
That the behaviour of the eigenvectors of an operator has an influence on its hypercyclicity properties was first discovered by Godefroy and Shapiro in \cite{GS}: their work deals with eigenvectors associated to eigenvalues of modulus strictly larger than $1$ and strictly smaller than $1$. The eigenvectors associated to eigenvalues of modulus $1$ first appeared in the works of Flytzanis \cite{Fl} and Bourdon and Shapiro \cite{Sh}. 
Then it was shown in \cite{BG1} that if $T$ has ``sufficiently many \eve s associated to unimodular \eva s'' (precise definitions will be given later on) then $T$ is \hy. In \cite{BG2} and \cite{BG3} this study is pushed further on in the direction of ergodic theory: under some assumptions bearing either on the geometry of the underlying space $X$ or on the regularity of the \eve\ fields of the \op\ $T$, it is proved that $T$ admits a \nd\ \inv\ \ga\ \mea\ \wrt\ which it is \erg\ (even \wmx). Then a straightforward application of Birkhoff's \erg\ theorem shows that $T$ is ``more than hypercyclic'': it is \emph{\fhy}, i.e. there exists a vector $x\in X$ such that for every non-empty open subset $U$ of $X$, the set $\{n\geq
  0 \textrm{ ; } T^{n}x\in U\}$ of instants when the iterates of $x$ under $T$ visit $U$ has positive lower density. Such a vector $x$ is called a \fhy\ vector for $T$. Frequent \hy ity is a much stronger notion than \hy ity, and some \ops\ are \hy\ without being \fhy: an example is the Bergman \bs\ \cite{BG2}, and then it was proved in \cite{S} that no \hy\ \op\ whose spectrum has an isolated point can be \fhy. Thus, although every infinite-dimensional separable Banach space supports a \hy\ \op\ (\cite{A},\cite{B}), there are spaces on which there are no \fhy\ \ops. Nonetheless, quite a large number of \hy\ \ops\ are \fhy, at least on Hilbert spaces (see for instance \cite{BG2}, \cite{BoGE}). One of the tools which are used to prove the frequent hypercyclicity of an operator is the ergodic-theoretic argument mentioned above: it shows that as soon as $T$ has sufficiently many \eve s associated to unimodular \eva s, $T$ is \fhy. 
\par\smallskip
More precisely, let us recall the following definition from \cite{BG1} and \cite{BG2}, which quantifies the fact that $T$ admits ``plenty'' \eve s associated to \eva s lying on the unit circle $\T=\{\lambda \in\C \textrm{ ; } |\lambda |=1\}$:

\begin{definition}\label{def1}
We say that a bounded operator $T$ on $X$ has a \ps\ if there exists a continuous probability measure $\sigma  $ on the unit circle $\T$ such that for every $\sigma  $-measurable subset $A$ of $\T$ which is of $\sigma  $-measure $1$, $\textrm{sp}[\ker(T-\lambda )\textrm{ ; } \lambda \in A]$ is dense in $X$.
\end{definition}
In other words if we take out from the unit circle a set of $\sigma  $-measure $0$ of \eva s, the \eve s associated to the remaining \eva s still span $X$.
\par\smallskip
The following result is proved in \cite{BG2}:

\begin{theorem}\cite{BG2}\label{th0}
If $T$ is a bounded \op\ acting on a \sep\ infinite dimensional complex Hilbert space $H$, and if $T$ has a \ps, then $T$ is \fhy.
\end{theorem}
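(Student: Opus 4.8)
The plan is to obtain frequent hypercyclicity from Birkhoff's pointwise ergodic theorem, after endowing $H$ with a $T$-invariant non-degenerate Gaussian probability measure $m$ for which $T$ is ergodic (in fact weakly mixing); the perfectly spanning assumption is used exactly to manufacture such an $m$. So the first task is to unpack Definition~\ref{def1}: let $\sigma$ be a continuous probability measure on $\T$ realising the perfectly spanning property, and extract a sequence $(E_{n})_{n\ge1}$ of $\sigma$-measurable maps $E_{n}\colon\T\to H$ satisfying $TE_{n}(\lambda)=\lambda E_{n}(\lambda)$ for $\sigma$-almost every $\lambda$, normalised so that $\sum_{n\ge1}\int_{\T}\norm{E_{n}(\lambda)}^{2}\,d\sigma(\lambda)<\infty$, and such that for every Borel set $A\subseteq\T$ with $\sigma(A)=1$ the family $\set{E_{n}(\lambda)\;:\;n\ge1,\ \lambda\in A}$ spans a dense subspace of $H$. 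The existence of sufficiently many measurable selections of the multifunction $\lambda\mapsto\ker(T-\lambda)$ follows from a Kuratowski--Ryll-Nardzewski type selection theorem together with separability of $H$; producing from a countable dense subset of $H$ a countable family of fields whose values still span densely after deletion of an arbitrary $\sigma$-null set is a diagonalisation/exhaustion argument based directly on Definition~\ref{def1}, and the normalisation is arranged by rescaling each $E_{n}$.

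Next I would build $m$ by a stochastic integral. On an auxiliary probability space $\Omega$, let $(K_{n})_{n\ge1}$ be independent circularly symmetric complex Gaussian random measures on $(\T,\sigma)$, so that each $\int_{\T}f\,dK_{n}$ is a centred complex Gaussian with $\E\,\bigl|\int_{\T}f\,dK_{n}\bigr|^{2}=\int_{\T}|f|^{2}\,d\sigma$ and $\int f\,dK_{n},\ \int g\,dK_{m}$ are independent for $n\ne m$; set
\[
\xi=\sum_{n\ge1}\int_{\T}E_{n}(\lambda)\,dK_{n}(\lambda),
\]
which converges in $L^{2}(\Omega;H)$ because $\E\norm{\xi}^{2}=\sum_{n}\int_{\T}\norm{E_{n}(\lambda)}^{2}\,d\sigma(\lambda)<\infty$; let $m$ be the law of $\xi$, a centred Gaussian measure on $H$ with covariance operator $R=\sum_{n}\int_{\T}E_{n}(\lambda)\otimes E_{n}(\lambda)\,d\sigma(\lambda)$. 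The kernel of $R$ consists of the vectors orthogonal to all $E_{n}(\lambda)$ ($n\ge1$, $\sigma$-a.e.\ $\lambda$), hence is trivial by the spanning property, so $m$ is non-degenerate with full support. Invariance is now immediate and, conveniently, does not require $\sigma$ to be rotation invariant: since $|\lambda|=1$,
\[
TRT^{*}=\sum_{n}\int_{\T}\bigl(TE_{n}(\lambda)\bigr)\otimes\bigl(TE_{n}(\lambda)\bigr)\,d\sigma(\lambda)=\sum_{n}\int_{\T}|\lambda|^{2}\,E_{n}(\lambda)\otimes E_{n}(\lambda)\,d\sigma(\lambda)=R,
\]
and as $m$ is a centred circularly symmetric complex Gaussian measure it is determined by $R$; hence $T_{*}m=m$.

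For ergodicity I would use the spectral theory of Gaussian dynamical systems. The Koopman operator $U_{T}\colon f\mapsto f\circ T$ on $\esp$ respects the Wiener chaos decomposition $\esp=\bigoplus_{k\ge0}H^{(k)}$, equals the identity on $H^{(0)}=\C$, and on each higher chaos is unitarily equivalent to a symmetric tensor power of $U_{T}$ restricted to the first chaos $H^{(1)}$. On $H^{(1)}$ the operator $U_{T}$ is unitarily equivalent, via $x\mapsto\pss{\cdot}{x}$ on the holomorphic part and its conjugate on the antiholomorphic part, to $T^{*}$ acting on the $m$-completion of $H$; combined with the intertwining $W^{*}T^{*}=M_{\bar\lambda}W^{*}$ coming from Step~1 (where $Wf=\sum_{n}\int_{\T}f_{n}\,E_{n}\,d\sigma$ has dense range and satisfies $TW=WM_{\lambda}$), this shows that the maximal spectral type of $U_{T}|_{H^{(1)}}$ is absolutely continuous with respect to $\sigma+\bar\sigma$. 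Since $\sigma$ is continuous, this spectral type is continuous, and the classical Fock-space mechanism---convolution of a continuous measure with any finite measure is again continuous---propagates continuity of the spectral type to $U_{T}|_{H^{(k)}}$ for every $k\ge1$. Consequently $U_{T}$ has no non-constant eigenfunctions: $(H,\bor,m,T)$ is weakly mixing, hence ergodic.

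Finally I would conclude. Fix a countable basis $(U_{k})_{k\ge1}$ of non-empty open subsets of $H$; since $m$ has full support, $m(U_{k})>0$ for every $k$, and since $T$ preserves the ergodic measure $m$, Birkhoff's theorem gives, for each $k$ and for $m$-almost every $x\in H$,
\[
\frac1N\,\#\set{0\le n<N\;:\;T^{n}x\in U_{k}}\ \longrightarrow\ m(U_{k})>0\qquad\text{as }N\to\infty ,
\]
so $\set{n\ge0\;:\;T^{n}x\in U_{k}}$ has positive lower density. Intersecting the relevant full-measure sets over $k$, $m$-almost every $x$ is a \fhy\ vector for $T$, and in particular $T$ is \fhy. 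The two substantial steps are the first and the third: distilling a summable family of measurable eigenvector fields out of the bare perfectly spanning hypothesis is the technical heart, while converting the atomlessness of $\sigma$ into weak mixing of $T$ for $m$ is the conceptual heart---the delicate point being to identify the maximal spectral type of $U_{T}$ on the first chaos and to verify that it is dominated by $\sigma$.
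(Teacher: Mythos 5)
Your proposal is correct and is essentially the proof from \cite{BG2} that the paper cites for this statement: extract countably many measurable eigenvector fields over the continuous measure $\sigma$, build from them a non-degenerate $T$-invariant Gaussian measure on $H$ (using the Hilbert-space fact that any positive trace-class operator is a Gaussian covariance), deduce weak mixing from the continuity of the maximal spectral type of the Koopman operator on the Wiener chaoses, and conclude with Birkhoff's theorem. The paper itself only records this Gaussian/ergodic route for Theorem \ref{th0} and instead develops a different, ergodicity-free argument (Steinhaus sums plus Proposition \ref{prop1}) for the Banach-space generalization, but for the Hilbertian statement at hand your argument is exactly the one the paper attributes to \cite{BG2}.
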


As mentioned, above, the method of proof of this statement is rather complicated, since it involves the construction of an \inv\ \erg\ \ga\ \mea\ for the \op\ $T$. Moreover \ga\ \mea s are much easier to deal with on Hilbert spaces than on general Banach spaces, because a complete description of the covariance operators of Gaussian measures is available on Hilbert spaces. We refer the reader to \cite[Ch. 6, Section 2]{BL} for a study of Gaussian measures in the Hilbertian setting, and to \cite{VTC}
for a presentation in the Banach space case. This explains why, when trying to prove a Banach space version of Theorem \ref{th0}, we were compelled in \cite{BG3} to add some assumption concerning either the geometry of the space (that $X$ is of type $2$, for instance) or the regularity of the eigenvector fields of the \op\ (that they can be parametrized in a ``smooth'', i.e. $\alpha $-H\"olderian way for some suitable $\alpha $). See the book \cite{BM} for more details on these results.
\par\smallskip
Thus the following question remained open in \cite{BG3}:

\begin{question}\cite{BG3}\label{q1}
 If $X$ is a general \sep\ complex infinite-dimensional Banach space and $T$ is a bounded operator on $X$ which has a \ps, must $T$ be \fhy?
\end{question}

It is proved in \cite{BG2} that if $T$ has perfectly spanning unimodular \eve s, then $T$ must already be \hy.
The main result  of this paper is an affirmative answer to Question \ref{q1}:

\begin{theorem}\label{th1bis}
Let $T$ be a bounded \op\ acting on a complex Banach space $X$. If the \eve s of $T$ associated to \eva s of modulus $1$ are perfectly spanning, then $T$ is \fhy.
\end{theorem}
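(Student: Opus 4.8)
The plan is to construct a \fhy\ vector for $T$ directly, since the Hilbert-space proof of Theorem~\ref{th0} goes through an ergodic \gam, and there is no obvious substitute on a general Banach space. (One might hope to reduce to Theorem~\ref{th0} by factoring $T$ through a Hilbert-space operator $S$ with perfectly spanning unimodular \eve s via a dense-range intertwiner $J$, $JS=TJ$ --- then the image under $J$ of a \fhy\ vector of $S$ is \fhy\ for $T$, an open $U\subseteq X$ pulling back to a nonempty open subset of $H$ --- but producing such an $S$ is essentially equivalent to producing an ergodic \gam\ for $T$, so this is circular.) First I would put the hypothesis in workable form. Using the standard reformulations of perfect spanning (\cite{BG1},\cite{BM}): since the measure in Definition~\ref{def1} is continuous, for every countable $D\subseteq\T$ the \eve s with \eva\ outside $D$ still span $X$, and one may choose countably many \emph{continuous} (after a harmless reparametrization, even Lipschitz on arcs) \eve\ fields $u_i:\T\to X$, $Tu_i(\lambda)=\lambda u_i(\lambda)$, with $\overline{\vect}\{u_i(\lambda):i\geq1,\ \lambda\in\T\}=X$. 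In particular any target is approximated by finite sums $\sum_m c_m u_{i_m}(\zeta_m)$ in which the $\zeta_m$ may be taken rationally independent, so that by Kronecker's theorem $\{s\in\N:|\zeta_m^{s}-1|<\delta\ \forall m\}$ is syndetic, hence of positive lower density, for every $\delta>0$.

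The building blocks are \emph{wave packets}. Since the $u_i$ are continuous, for an arc $I\subseteq\T$ and $t\in\N$ the Bochner integral $\phi_{I,i}^{(t)}=\int_I\lambda^{-t}u_i(\lambda)\,d\sigma(\lambda)$ is defined and $T^n\phi_{I,i}^{(t)}=\int_I\lambda^{n-t}u_i(\lambda)\,d\sigma(\lambda)$. Thus $\phi_{I,i}^{(t)}$ is small in norm ($\|\phi_{I,i}^{(t)}\|\ls\min(\sigma(I),1/|t|)$, by oscillation of $\lambda^{-t}$ on $I$), while its orbit is \emph{localized in time}: $T^n\phi_{I,i}^{(t)}\approx\sigma(I)\,\zeta^{\,n-t}u_i(\zeta)$ for $|n-t|\ls\sigma(I)^{-1}$ (the ``plateau''), decaying like $1/|n-t|$ outside it. Scaling such packets and superposing those centred at the same $t$ but at the rationally independent points $\zeta_m$, one gets, for every target $y$ and every $t$, a vector $\Delta$ of controlled norm with $T^n\Delta\approx y$ for all $n$ in the plateau at which $\zeta_m^{\,n-t}\approx1$ for every $m$ --- a positive proportion of that plateau.

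Now enumerate the requirements $(y_p,1/q)$, with $(y_p)$ dense in $X$, assign to each a set $A_{p,q}\subseteq\N$ of positive lower density, the $A_{p,q}$ pairwise disjoint, each written as a union of successive windows $[t_k,t_k+\ell_k]$ with $t_k$ growing geometrically, and take the candidate vector $x=\sum_{p,q}\sum_k\Delta_k^{(p,q)}$, where $\Delta_k^{(p,q)}$ is the block above, reconstructing $y_p$ to precision $\tfrac1{2q}$ on $[t_k,t_k+\ell_k]$, with packet widths $\sigma(I)\sim\ell_k^{-1}$ so the block is supported (up to a small tail) on that window. At $n\in[t_k,t_k+\ell_k]$ in the Kronecker return set, $T^nx$ is $y_p$ plus the contributions of all other blocks; separating those centred before and after $n$ and using the off-plateau decay, one hopes to bound the total interference by $\tfrac1{2q}$, so that $\{n:\|T^nx-y_p\|<1/q\}$ contains a positive proportion of $A_{p,q}$, still of positive lower density.

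I expect the real difficulty to be precisely this interference estimate together with the convergence of the series for $x$. Positive lower density of $A_{p,q}$ forces the windows to be essentially proportionally long, $\ell_k\sim t_k$, so the blocks $\Delta_k^{(p,q)}$ have non-summable norms; the series for $x$ cannot converge absolutely, and --- exactly as a random Fourier series converges when $\sum\|a_k\|^{2}<\infty$ though $\sum\|a_k\|=\infty$, provided the frequencies are spread out --- one is led to randomize, putting independent random unimodular multipliers on the individual packets and proving almost sure convergence in $X$ with $\sup_{n\notin A_{p,q}}\|(\text{interfering part of }T^nx)\|$ small, using the quasi-orthogonality of the distinct time windows and frequency arcs in place of the \gam/ergodic machinery behind Theorem~\ref{th0}. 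Making this work on an \emph{arbitrary} Banach space, with no type or cotype assumption, is the crux; it will probably need the quantitative Lipschitz-after-reparametrization control of the \eve\ fields (for the $1/|n-t|$ decay of the packets) and a careful martingale or stopping-time argument to absorb the infinitely many interfering blocks. Once convergence and the interference bound are secured, the remaining bookkeeping --- enumerating requirements, the Kronecker returns, the disjointness and density of the $A_{p,q}$ --- is routine.
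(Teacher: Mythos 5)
Your proposal has two genuine gaps, one in the reduction and one in the core construction, and the key structural idea that makes the paper's proof work is missing. First, the reduction: you assume that perfect spanning yields countably many continuous (even Lipschitz after reparametrization) eigenvector fields $u_i$ defined on arcs of $\T$ with $Tu_i(\lambda)=\lambda u_i(\lambda)$. This is far stronger than what the hypothesis gives. The correct statement (Theorem \ref{prop0} of the paper, itself a nontrivial construction) is that one obtains continuous fields $E_i:K_i\to S_X$ on subsets $K_i\subseteq\T$ homeomorphic to the Cantor set, with no modulus-of-continuity control whatsoever. This destroys the quantitative heart of your wave-packet scheme: the bound $\|\phi_{I,i}^{(t)}\|\lesssim 1/|t|$ and the $1/|n-t|$ off-plateau decay rest on oscillation of $\lambda^{-t}$ against a field of bounded variation on an arc; for a continuous measure carried by a Cantor set the relevant Fourier coefficients need not decay at all, so the interference estimate you yourself identify as the crux has no foundation. (Smoothness hypotheses of exactly this kind are what \cite{BG3} had to impose, and removing them is the whole point of the theorem.)

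Second, you are doing the positive-lower-density bookkeeping by hand (disjoint windows $[t_k,t_k+\ell_k]$ of proportional length, interference between infinitely many blocks), and you concede that both the convergence of the series and the interference bound are unresolved. The paper avoids this entirely via a simple but decisive observation (Proposition \ref{prop1}): if a $T$-invariant probability measure $m$ satisfies $m(HC(T))=1$, then $m$ automatically has full support and Birkhoff's theorem gives $m(FHC(T))=1$. So one only needs to produce an invariant measure almost all of whose points are \emph{hypercyclic} -- frequency comes for free. The measure is the law of $\Phi(\omega)=\sum\chi_n(\omega)y_n$ with independent Steinhaus multipliers on eigenvectors (invariance is immediate from rotation-invariance of the $\chi_n$), and the norm of each block is controlled with no type or cotype assumption by splitting each coefficient into $N$ equal pieces $a/N$ (so the sum is preserved while the $\ell^2$ norm is $O(1/\sqrt N)$), spreading them over nearby eigenvectors with $\Q$-independent arguments, and applying the scalar Khinchine inequality for Steinhaus sums inside the fixed finite-dimensional span of the original eigenvectors. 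Kronecker's theorem then recombines the randomized phases to hit each open set. Your closing paragraph gropes toward the randomization, but without the invariant-measure/Birkhoff shortcut and without the coefficient-splitting plus Khinchine mechanism, the argument as proposed cannot be completed.
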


The proof of Theorem \ref{th1bis} is the object of the first three sections of the paper.
It relies on the construction of an explicit \inv\ \mea\ and on the use of Birkhoff's ergodic theorem, as in \cite{G4} where a ``Random Frequent Hypercyclicity Criterion'' is proved using somewhat similar tools. One interesting point is that this measure is constructed using independent Steinhaus variables, instead of \ga\ ones as in the previous constructions of \cite{BG3} and \cite{G4}. We obtain on our way (in Section 4) several characterizations, which are of interest in themselves, of \ops\ having perfectly spanning unimodular \eve s.
\par\smallskip
It is also interesting to note that the \op\ $T$ of Theorem \ref{th1bis} will never be \erg\ \wrt\ one of the \inv\ \mea s constructed in the proof: this result is proved in Section 5.
\par\smallskip
In the last section of the paper we collect miscellaneous remarks and open questions. In particular we mention how Theorem \ref{th1bis} can be applied to retrieve the main result of \cite{DFGP}, namely that any infinite-dimensional separable complex Banach space with an unconditional Schauder decomposition supports a frequently hypercyclic operator.
\par\bigskip

\textbf{Acknowledgement:} I wish to thank the referee for his/her very careful reading of the paper, and for his/her numerous comments which greatly improved the presentation of this work.

\section{Strategy for the proof of Theorem \ref{th1bis}}
We are going to derive Theorem \ref{th1bis} from our forthcoming Theorem \ref{th2}, which states that if $T$ is a bounded \hy\ \op\ on a separable infinite-dimensional complex Banach space $X$ whose \eve s associates to \eva s of modulus $1$ span a dense subspace of $X$, then $T$ is \fhy\ provided the unimodular \eve s of $T$ satisfy some additional assumption (H). Assumption (H) is a priori weaker than the assumption that $T$ has perfectly spanning unimodular \eve s, although it will turn out to be actually equivalent to it (see Section 4).
\par\smallskip
Before stating assumption (H), let us start with two elementary lemmas.
Let $T$ be a \hy\ \op\ on $X$ whose \eve s associated to unimodular \eva s span a dense subspace of $X$. We denote by $\sigma  _{p}(T)\cap\T$ the set of \eva s of $T$ of modulus $1$.
\begin{lemma}\label{lem1}
Let $F$ be a finite subset of $\sigma  _{p}(T)\cap\T$. Then ${\textrm{sp}}[\ker(T-\lambda ) \textrm{ ; } \lambda \in\T\setminus F]$ is dense in $X$.
\end{lemma}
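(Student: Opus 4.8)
The plan is to exploit the fact that $T$ is hypercyclic, so its point spectrum cannot contain isolated points that carry "too much" of the spanning. More precisely, I would argue by induction on the cardinality of $F$, so that it suffices to treat the case $F=\{\mu\}$ of a single eigenvalue $\mu\in\sigma_p(T)\cap\T$: if $\mathrm{sp}[\ker(T-\lambda):\lambda\in\T\setminus\{\mu\}]$ is dense whenever we remove one point, then removing finitely many points one at a time preserves density (note that $T$ restricted to the closed span of the remaining eigenvectors is still hypercyclic, or one can simply iterate the one-point statement directly with the observation that the relevant subspace stays dense at each stage). So fix $\mu$ and let $Y=\overline{\mathrm{sp}}[\ker(T-\lambda):\lambda\in\T\setminus\{\mu\}]$; I want to show $Y=X$.

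The key step is the following. Since the unimodular eigenvectors span a dense subspace of $X$, it is enough to show that every eigenvector $x\in\ker(T-\mu)$ already lies in $Y$. Suppose for contradiction that some $x_0\in\ker(T-\mu)$, $x_0\neq 0$, is not in $Y$. By Hahn--Banach there is a functional $x^*\in X^*$ with $x^*|_Y=0$ and $x^*(x_0)\neq 0$. Now consider any hypercyclic vector $z$ for $T$: I would like to derive a contradiction from the fact that $z$ can be approximated by its orbit while $x^*$ "sees only the $\mu$-direction". The cleanest route is spectral: the hypothesis that the unimodular eigenvectors span densely, combined with $x^*$ annihilating all of them except those for $\mu$, forces $x^*$ to be "supported" on $\ker(T-\mu)$ in a suitable sense, and then the orbit $(T^nz)$ satisfies $x^*(T^nz)=\mu^n\,x^*(Pz)$-like behaviour; but $|\mu|=1$ means $|x^*(T^nz)|$ is constant, contradicting the density of the orbit (which would require $x^*(T^nz)$ to be dense in $\C$). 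The technical content is making the phrase "$x^*$ is supported on $\ker(T-\mu)$" precise without any extra structure on $X$; one does this purely algebraically: for each eigenvector $y$ with eigenvalue $\lambda\neq\mu$ we have $x^*(y)=0$, and these $y$'s together with $\ker(T-\mu)$ span densely, so on a dense subspace $x^*$ agrees with a functional that factors through the (possibly non-closed, but that is fine) algebraic projection onto $\ker(T-\mu)$ along the span of the other eigenvectors. Evaluating along the orbit of a hypercyclic vector then gives the modulus-constancy contradiction.

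I expect the main obstacle to be exactly this point: justifying rigorously that a functional vanishing on all unimodular eigenvectors except those associated with $\mu$ behaves, on orbits, like $\mu^n$ times a constant. The subtlety is that the decomposition of $X$ (or of a dense subspace) into $\ker(T-\mu)$ plus the span of the other eigenvectors need not be topological, so one cannot simply invoke a bounded spectral projection. The remedy is to work with a hypercyclic vector $z$, write approximations $z_k\to z$ with $z_k$ a finite linear combination of unimodular eigenvectors, split each $z_k = a_k + b_k$ with $a_k\in\ker(T-\mu)$ and $b_k\in\mathrm{sp}[\ker(T-\lambda):\lambda\neq\mu]$, note $x^*(T^nz_k)=\mu^n x^*(a_k)$ exactly, and then control the passage $k\to\infty$ along a fixed iterate $n$ using continuity of $x^*$ and of $T^n$; density of the orbit of $z$ and $|\mu^n x^*(a_k)|=|x^*(a_k)|$ bounded independently of $n$ yields the contradiction once one checks $x^*(a_k)$ does not vanish in the limit, which follows from $x^*(x_0)\neq0$ by choosing the approximating combinations to include a fixed multiple of $x_0$. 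Alternatively — and this may be the slicker write-up — one avoids functionals entirely and instead notes that $Y$ is a closed $T$-invariant subspace containing a dense set of eigenvectors, so $T|_{X/Y}$ has $\mu$ as its only possible unimodular eigenvalue and $X/Y$ is spanned by generalized $\mu$-eigenvectors; a quotient of a hypercyclic operator is hypercyclic, but a hypercyclic operator cannot act on a nonzero space all of whose vectors have orbits lying on a single "circle of radius depending on the vector", forcing $X/Y=0$.
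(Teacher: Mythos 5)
Your argument is essentially correct, but it takes a genuinely different route from the paper. The paper passes to the quotient: with $X_{0}=\overline{\mathrm{sp}}[\ker(T-\lambda)\ ;\ \lambda\in\mathbb{T}\setminus F]$, the induced operator $\overline{T}$ on $X/X_{0}$ is hypercyclic, the images of eigenvectors with eigenvalues in $F$ span a dense subspace of the quotient, hence $\prod_{\lambda\in F}(\overline{T}-\lambda)=0$, which no hypercyclic operator on a nonzero space can satisfy; this treats all of $F$ in one stroke and needs no Hahn--Banach (your closing sketch is essentially this proof). Your main route dualizes instead: a functional $x^{*}$ annihilating $Y$, a hypercyclic vector $z$ approximated by finite combinations $z_{k}=a_{k}+b_{k}$ of unimodular eigenvectors, the exact identity $x^{*}(T^{n}z_{k})=\mu^{n}x^{*}(a_{k})$, and the limit $k\to\infty$ at fixed $n$, giving $x^{*}(T^{n}z)=\mu^{n}x^{*}(z)$ for all $n$; since a nonzero functional maps the dense orbit of $z$ onto a dense subset of $\mathbb{C}$ while $\{\mu^{n}x^{*}(z)\ ;\ n\geq 0\}$ lies on a single circle, this is a contradiction. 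This is a legitimate elementary variant (in effect the paper's quotient argument carried out in a one-dimensional quotient), and it correctly resolves the non-topological-splitting issue you flag. Two caveats for the write-up. First, the reduction to $|F|=1$ is not a formal consequence of the absolute one-point statement: you must run the one-point argument in relative form, approximating $z$ at stage $j$ by eigenvectors whose eigenvalues already avoid $\mu_{1},\ldots,\mu_{j-1}$ (this is what your parenthetical suggests, and is in the spirit of the paper's Lemma \ref{lem2}); the other parenthetical, that $T$ restricted to the closed span of the remaining eigenvectors is still hypercyclic, is not a valid general principle (restrictions to invariant subspaces need not be hypercyclic) and should be dropped. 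Alternatively, you can avoid induction altogether by applying $x^{*}$ to $p(T)T^{n}z_{k}$ with $p(t)=\prod_{\lambda\in F}(t-\lambda)$, concluding $x^{*}\circ p(T)=0$, so that $(x^{*}(T^{n}z))_{n}$ is a bounded exponential sum with frequencies in $F$, again contradicting density. Second, the worry that $x^{*}(a_{k})$ might vanish in the limit is unnecessary: $x^{*}(a_{k})=x^{*}(z_{k})\to x^{*}(z)$ automatically, and whether $x^{*}(z)$ is zero or not, constancy of $|x^{*}(T^{n}z)|$ already contradicts density of $\{x^{*}(T^{n}z)\}$ in $\mathbb{C}$; the proposed fix of forcing a fixed multiple of $x_{0}$ into the approximants is both unneeded and would spoil the approximation of $z$.
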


\begin{proof}
Suppose that $X_{0}=\overline{\textrm{sp}}[\ker(T-\lambda ) \textrm{ ; } \lambda \in\T\setminus F]$ is not equal to $X$, and let $\overline{T}$ be the operator induced by 
$T$ on the quotient space $\overline{X}=X/X_{0}$. Then $\overline{T}$ is \hy\ on $\overline{X}$.
Let $(x_{n})_{n\geq 1}$ be a sequence of elements of $\bigcup_{\lambda \in\T\setminus F}
\ker(T-\lambda )$ which span $X_{0}$, and $(y_{n})_{n\geq 1}$ a sequence of elements of $\bigcup_{\lambda \in F}
\ker(T-\lambda )$ such that the set $\{x_{n}, \,y_{n} \textrm{ ; } n\geq 1\}$ span a dense subspace of $X$: then $\{\overline{x_{n}}, \,\overline{y_{n}} \textrm{ ; } n\geq 1\}$ span a dense subspace of $\overline{X}$, i.e. $\{\overline{y_{n}} \textrm{ ; } n\geq 1\}$ span a dense subspace of $\overline{X}$. Hence the \eve s associated to the \eva s of $\overline{T}$ belonging to the finite set $F$ span a dense subspace of $\overline{X}$, so that $\prod_{\lambda \in F}(\overline{T}-\lambda )=0$, which contradicts the hypercyclicity of $\overline{T}$. Hence $X_{0}=X$.
\end{proof}

The proof of Lemma \ref{lem1} actually shows:

\begin{lemma}\label{lem2}
Let $(x_{n})_{n\geq 1}$ be a sequence of \eve s of $T$, $Tx_{n}=\lambda _{n}x_{n}$, $|\lambda _{n}|=1$, such that ${\textrm{sp}}[x_{n}\textrm{ ; } n\geq 1]$ is dense in $X$. If $F$ is any finite subset of $\sigma  _{p}(T)\cap\T$, then ${\textrm{sp}}[x_{n}\textrm{ ; } n\in A_{F}]$ is dense in 
$X$,
where $A_{F}=\{n\geq 0 \textrm{ ; } \lambda _{n}\not \in F\}$.
\end{lemma}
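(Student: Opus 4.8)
The plan is to re-run, essentially word for word, the quotient argument used to prove Lemma~\ref{lem1}, with the given sequence $(x_{n})_{n\geq 1}$ now playing simultaneously the roles of the two families $(x_{n})$ and $(y_{n})$ appearing there. Set $X_{0}=\overline{\textrm{sp}}[x_{n}\textrm{ ; } n\in A_{F}]$. Since every $x_{n}$ is an \eve\ of $T$, the closed subspace $X_{0}$ is $T$-invariant. Suppose, for a contradiction, that $X_{0}\neq X$, and let $\overline{T}$ be the \op\ induced by $T$ on the nonzero quotient space $\overline{X}=X/X_{0}$, with $q\colon X\to\overline{X}$ the canonical surjection.

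First I would note that $\overline{T}$ is \hy\ on $\overline{X}$: the map $q$ is continuous, surjective and open, it intertwines $T$ and $\overline{T}$ because $X_{0}$ is $T$-invariant, and hence it sends any \hy\ vector of $T$ to a \hy\ vector of $\overline{T}$. Next, by construction $q(x_{n})=0$ for every $n\in A_{F}$, while $\{q(x_{n})\textrm{ ; } n\geq 1\}$ spans a dense subspace of $\overline{X}$, being the image under $q$ of the dense subspace $\textrm{sp}[x_{n}\textrm{ ; } n\geq 1]$ of $X$; it follows that $\{q(x_{n})\textrm{ ; } n\notin A_{F}\}$ already spans a dense subspace of $\overline{X}$. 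For each such index $n$ we have $\lambda _{n}\in F$ and $\overline{T}q(x_{n})=q(Tx_{n})=\lambda _{n}q(x_{n})$, so $q(x_{n})$ is either $0$ or an \eve\ of $\overline{T}$ for an \eva\ lying in the finite set $F$.

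Consequently $\prod_{\lambda \in F}(\overline{T}-\lambda )$ vanishes on a dense subspace of $\overline{X}$, hence is identically $0$; thus $\overline{T}$ is algebraic with spectrum contained in $F$, which contradicts the \hy ity of $\overline{T}$, exactly as at the end of the proof of Lemma~\ref{lem1}. Therefore $X_{0}=X$, which is the assertion of the lemma. I do not expect any real obstacle here: the two ingredients deserving a word of justification — that a quotient of a \hy\ \op\ by a closed \inv\ subspace is again \hy, and that an \op\ with finite spectrum on a nonzero Banach space is never \hy — are both classical, and are precisely the facts already invoked in the proof of Lemma~\ref{lem1}.
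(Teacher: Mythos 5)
Your proof is correct and is precisely the paper's own argument: the paper proves Lemma \ref{lem2} simply by remarking that the proof of Lemma \ref{lem1} applies verbatim, which is exactly what you reproduce (quotient by $X_{0}$, hypercyclicity of the induced operator $\overline{T}$ under the section's standing assumption that $T$ is hypercyclic, density of the images $q(x_{n})$ with $\lambda_{n}\in F$, and the identity $\prod_{\lambda\in F}(\overline{T}-\lambda)=0$ yielding the contradiction). One small correction to your closing gloss: the classical fact invoked is that a hypercyclic operator cannot satisfy a nontrivial polynomial identity (if $p(\overline{T})=0$ every orbit lies in a finite-dimensional subspace), not that finite spectrum rules out hypercyclicity --- there do exist hypercyclic operators with one-point spectrum, such as $I+B_{w}$ with $B_{w}$ a quasinilpotent weighted backward shift.
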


Suppose now that $T$ satisfies the following assumption (H):
\par\medskip
\emph{
There exists a sequence $(x_{n})_{n\geq 1}$ of \eve s of $T$, $Tx_{n}=\lambda _{n}x_{n}$, $|\lambda _{n}|=1$, $\lambda _{n}=e^{2i\pi\theta _{n}}$, $\theta _{n}\in ]0,1]$, $||x_{n}||=1$, having the following properties:}
\begin{itemize}
\item[(1)] \emph{whenever $(\lambda _{n_{1}},\ldots, \lambda_{n_{k}})$ is a finite family of distinct elements of the set $\{\lambda _{n} \textrm{ ; } n\geq 1 \}$, the family $(\theta  _{n_{1}},\ldots, \theta _{n_{k}})$ consists of $\Q$-independent irrational numbers;}

\item[(2)] \emph{${\textrm{sp}}[x_{n}\textrm{ ; } n\geq 1]$ is dense in $X$;}

\item[(3)] \emph{for any finite subset $F$ of $\sigma  _{p}(T)\cap\T$ we have $\overline{\{x_{n} \textrm{ ; } n\geq 1\}}=\overline{\{x_{n} \textrm{ ; } n\in A_{F}\}}$, where $A_{F}=\{k\geq 0 \textrm{ ; } \lambda _{k}\not \in F\}$.}
\end{itemize}

\par\medskip
Assertion (3) of assumption (H) states that given any finite set $F$ of \eva s of $T$, any $x_{n}$ can be approximated as closely as we wish by \eve s associated to \eva s not belonging to $F$. 
Assertion (1) ensures that we have some ``independence'' of the \eva s $\lambda _{n}$; this will turn out to be necessary in the proof of Theorem \ref{th1bis}.
It is not difficult to see already (more details will be given in Section 4 later on) that assumption (H) will be satisfied provided the unimodular \eve s of $T$ can be parametrized via countably many continuous eigenvector fields. As will also be seen in Section 4, this seemingly weaker assumption is in fact equivalent to the requirement that the unimodular \eve s of $T$ be perfectly spanning.
\par\smallskip
The first step in the proof of Theorem \ref{th1bis} is to prove the following statement:

\begin{theorem}\label{th2}
If $T$ is a bounded \op\ on $X$ which is \hy\ and satisfies assumption (H), then $T$ is \fhy.
\end{theorem}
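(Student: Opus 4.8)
The plan is to construct an explicit $T$-invariant probability measure $m$ on $X$ with full support, and then apply Birkhoff's ergodic theorem along an ergodic component to produce a frequently hypercyclic vector. The measure will be built from independent Steinhaus random variables rather than Gaussian ones. Concretely, I would fix a sequence $(x_n)_{n\ge 1}$ of unimodular eigenvectors, $Tx_n=\lambda_nx_n$, $\lambda_n=e^{2i\pi\theta_n}$, satisfying assumption (H), together with a summable sequence of positive weights $(a_n)_{n\ge1}$ chosen so small that $\sum_n a_n<\infty$, and consider the random vector $\xi=\sum_{n\ge1}a_n\chi_n x_n$, where $(\chi_n)_{n\ge1}$ is an i.i.d. sequence of Steinhaus variables (uniform on $\T$) on some probability space $(\Omega,\mathcal F,\P)$. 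The series converges a.s.\ in $X$ since $\sum_n a_n\|x_n\|<\infty$; let $m$ be the distribution of $\xi$ on $X$. The key algebraic identity is $T\xi=\sum_n a_n(\lambda_n\chi_n)x_n$, and since $(\lambda_n\chi_n)_{n\ge1}$ has the same joint distribution as $(\chi_n)_{n\ge1}$ (rotation invariance of Haar measure on $\T$, applied coordinatewise), $T$ preserves $m$. One should also check that $m$ has full support: this follows from (2) of (H) together with the fact that each $\chi_n$ charges every neighbourhood of every point of $\T$, so finite partial sums $\sum_{n\le N}a_n\chi_nx_n$ can be made to approximate any element of $\mathrm{sp}[x_n]$ while the tail is uniformly small.

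Next I would pass to ergodicity. The measure $m$ itself need not be ergodic (indeed Section 5 says it never is), so instead I would work with the product/ergodic decomposition trick: consider the system $(X^{\mathbb N},m^{\otimes\mathbb N},T\times T\times\cdots)$ or, more efficiently, use the fact that to get a frequently hypercyclic vector it suffices to find, for each basic open set $U$ in a countable base, a single orbit hitting $U$ with positive lower density, and that the set of such vectors is residual-like in measure. The cleanest route is: show that $T$ acting on $(X,m)$ satisfies the hypotheses of the abstract Birkhoff-type argument used in \cite{G4} (the ``Random Frequent Hypercyclicity Criterion''), namely that for every nonempty open $U\subseteq X$ one has $m(U)>0$ and the Cesàro averages $\frac1N\sum_{k<N}\I_U(T^k\xi)$ converge a.s.\ to a positive limit. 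Since $m$ is $T$-invariant but possibly not ergodic, one extracts from Birkhoff's theorem that $\frac1N\sum_{k<N}\I_U(T^k\xi)\to \E[\I_U\mid\mathcal I]$ a.s., where $\mathcal I$ is the invariant $\sigma$-field; the point is to show this conditional expectation is a.s.\ strictly positive. This is where assumptions (1) and (3) enter: the $\Q$-independence and irrationality of the $\theta_n$ force enough ``rotation'' in the coordinates that the orbit of $m$-almost every $\xi$ is dense (this is essentially the statement that $T$ is $m$-almost surely topologically transitive, reproving hypercyclicity), and (3) lets one discard any finite set of ``bad'' eigenvalues without shrinking the closed linear span, which is exactly what is needed to approximate points of $U$ by partial sums whose frequencies of return can be estimated via a Weyl equidistribution / multiple recurrence argument on the finite-dimensional tori $\{(\theta_{n_1},\dots,\theta_{n_k})\}$.

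The main obstacle, and the heart of the proof, is the positivity of the return-frequency: one must show that for $m$-almost every $\xi$ and every basic open set $U$, $\liminf_N \frac1N\#\{k<N: T^k\xi\in U\}>0$. I expect to handle this by a perturbative argument: approximate a target $u\in U$ by a finite sum $\sum_{n\in S}c_nx_n$ with $|c_n|\le a_n$, observe that $T^k\xi$ is close to $u$ whenever simultaneously (a) each $\lambda_n^k\chi_n$ for $n\in S$ is close to the prescribed phase, which by $\Q$-independence/irrationality of $(\theta_n)_{n\in S}$ happens for a set of $k$ of positive lower density by Weyl's equidistribution theorem on $\mathbb T^{|S|}$, and (b) the contribution of the tail $\sum_{n\notin S}a_n\lambda_n^k\chi_nx_n$ is small --- but the tail does \emph{not} go to zero along the orbit, so one cannot simply ignore it. The device, as in \cite{G4}, is to split the tail into a ``middle block'' $S'\supseteq S$ that is also controlled by equidistribution on a larger torus (using (1) again for the enlarged family), and a genuinely small remainder governed by $\sum_{n\notin S'}a_n$; by choosing the weights $a_n$ to decay fast enough and using independence to get the middle block to be small simultaneously with the right phases on $S$, one obtains a positive-density set of good instants $k$. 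Assumption (3) is invoked precisely to ensure the approximating eigenvectors can be chosen to avoid any finite exceptional set of eigenvalues, keeping the relevant frequencies $\theta_n$ $\Q$-independent and irrational. Once this positivity is established, Birkhoff gives a full-$m$-measure set of frequently hypercyclic vectors, and in particular such vectors exist, proving the theorem.
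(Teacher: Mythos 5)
Your high-level strategy is the right one and matches the paper's: build a $T$-invariant measure from a random series $\sum \chi_n y_n$ of unimodular eigenvectors with independent Steinhaus coefficients (invariance from rotation invariance of the $\chi_n$), and then get frequent hypercyclicity from Birkhoff's theorem \emph{without} ergodicity, since $\E(\chi_U\mid\mathcal I)$ is invariant and positive a.e.\ on $\bigcup_n T^{-n}(U)$ once that union has full measure. This is exactly Proposition \ref{prop1}. But there is a genuine gap in your construction of the measure. You take $\xi=\sum_n a_n\chi_n x_n$ with a fixed summable weight sequence $(a_n)$ attached to the given sequence $(x_n)$ from (H), and claim that full support (and, later, density of a.e.\ orbit) follows because the partial sums can approximate any element of $\mathrm{sp}[x_n]$. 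They cannot: the coefficient of $x_n$ in $T^k\xi$ is $a_n\lambda_n^k\chi_n$, of modulus \emph{exactly} $a_n$, so the reachable set is $\{\sum c_nx_n:\ |c_n|=a_n\}$, which in general is nowhere dense in the span (think of a target $\alpha_1x_1$ with $|\alpha_1|$ much larger than $a_1$). Your later perturbative step, approximating $u\in U$ by $\sum_{n\in S}c_nx_n$ with $|c_n|\le a_n$ and then aligning phases by Weyl, fails for the same reason; controlling the tail is not the main difficulty.

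The missing idea is the paper's ``duplication'' device, carried out by an inductive block construction adapted to a countable base $(U_n)$. At step $n$ one first uses hypercyclicity to find a finite combination $u_n=\sum_k\alpha_kx_k$ with $T^{p_n}u_n$ in (a shrunk copy of) $U_n$; then each coefficient $\alpha_k$ is split as $\alpha_k=\sum_j a_j^{(k)}$ with $\sum_k(\sum_j|a_j^{(k)}|^2)^{1/2}$ tiny (Lemma \ref{lem4}), and each tiny piece is attached to a \emph{different} eigenvector $x_j^{(k)}$ close to $x_k$ whose eigenvalues are all distinct, avoid the previously used ones, and have $\Q$-independent irrational arguments --- this is precisely where (H)(1) and (H)(3) are used. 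The small $\ell^2$ norm makes $\E\|\Phi_n\|$ small via the Steinhaus--Khinchine inequality, while Kronecker's theorem on the torus produces, for a.e.\ $\omega$, a power $p(\omega)$ in a \emph{finite} set with $(\lambda_j^{(k)})^{p(\omega)}\chi_j^{(k)}(\omega)\approx 1$, so that $T^{p(\omega)}\Phi_n(\omega)\approx u_n$ and the target is actually reached. Your sketch also omits the compatibility of the new block with the old ones: the power $p_n$ must simultaneously be a near-return time for all previously used eigenvalues (a set with bounded gaps, by Stewart--Tijdeman applied to a difference set) and a transitivity time for $U_n$; the paper gets their intersection to be nonempty by invoking the Hypercyclicity Criterion to make $T\oplus\cdots\oplus T$ transitive. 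Finally, the finiteness of the sets $\mathcal P_n$ of return times is needed to bound $\|T^{p_n(\omega)}\sum_{m>n}\Phi_m(\omega)\|$ by choosing $\E\|\Phi_m\|<4^{-m}\|T\|^{-\max(\pi_1,\dots,\pi_{m-1})}$. Without these ingredients the argument does not close.
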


Let $(\Omega ,\mathcal{F}, \P)$ be a standard probability space, and $(\chi_{n})_{n\geq 1}$ a sequence of  independent \st\ variables on $(\Omega ,\mathcal{F}, \P)$:
$\chi_{n}:\Omega \longrightarrow \T$, and for any subarc $I$ of $\T$, $$\P(\chi_{n} \in I)=\frac{|I|}{2\pi},$$ where $|I|$ is the length of $I$. We have $\E(f(\chi_{n}))=\frac{1}{2\pi}
\int_{0}^{2\pi}f(e^{i\theta })d\theta $ for any continuous function $f$ on $\T$, so that
$\E(\chi_{n})=0$ and $\E|\chi_{n}|^{2}=1$ for any $n\geq 1$. One important feature of these \st\ variables is that for any unimodular numbers $\lambda _{n}$, $\lambda _{n}\chi_{n}$ and $\chi_{n}$ have the same law. This makes these variables quite useful for constructing invariant measures for linear operators.
\par\smallskip
Suppose that $(y_{n})_{n\geq 1}$ is a sequence of \eve s of $T$, $Ty_{n}=\lambda _{n}y_{n}$, $|\lambda _{n}|=1$, such that the random series
$$\Phi(\omega )=\sum_{n\geq 1}\chi_{n}(\omega )y_{n}$$ is convergent almost everywhere. 
Then it is possible  to define a \mea\ $m$ on the Banach space $X $ by setting for any Borel subset $A$ of $X$
$$m(A)=\P(\{\omega \in \Omega  \textrm{ ; } \sum_{n\geq 1}\chi_{n}(\omega )y_{n}\in A\}).$$
The \mea\ $m$ is invariant by $T$:
\begin{eqnarray*}
m(T^{-1}(A))&=&\P(\{\omega \in \Omega  \textrm{ ; } \sum_{n\geq 1}\chi_{n}(\omega )Ty_{n}\in A\})\\
&=&\P(\{\omega \in \Omega  \textrm{ ; } \sum_{n\geq 1}\chi_{n}(\omega )\lambda _{n}y_{n}\in A\}).
\end{eqnarray*}
Since $|\lambda _{n}|=1$, $\lambda _{n}\chi_{n}$ and $\chi_{n}$ have the same law, and thus $m(T^{-1}(A))=m(A)$.
\par\smallskip
Our strategy to prove Theorem \ref{th1bis} is to construct a sequence $(y_{n})_{n\geq 1}$ of unimodular \eve s of $T$ which is such that
\begin{itemize}
 \item[(a)] the associated random series $\Phi(\omega )$ converges a.e. on $\Omega $;

\item[(b)] for almost every $\omega \in\Omega $, $\Phi(\omega )$ is \hy\  for $T$.
\end{itemize}
Once the sequence $(y_{n})_{n\geq 1}$ satisfying (a) and (b) is constructed, it is not difficult to see that $\Phi(\omega ) $ is \fhy\ for $T$ for almost every $\omega \in \Omega $: this is proved in \cite[Prop. 3.1]{G4} under the assumption that the measure $m$ associated to $\Phi$ is \nd, i.e. that $m(U)>0$ for any non-empty open subset $U$ of $X$. This a priori assumption that $m$ be \nd\ is in fact not necessary:

\begin{proposition}\label{prop1}
 Suppose that there exists a \mea\ $m$ which is \inv\ by $T$ and such that $m(HC(T))=1$, where $HC(T)$ denotes the set of \hy\ vectors for $T$. Then the set $FHC(T)$ of \fhy\ vectors for $T$ also satisfies $m(FHC(T))=1$. In particular $T$ is \fhy.
\end{proposition}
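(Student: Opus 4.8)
The plan is to show that the mere existence of a $T$-\inv\ measure $m$ with $m(HC(T))=1$ already forces $m$ to be \nd, i.e. $m(U)>0$ for every non-empty open subset $U$ of $X$; once this is established, the conclusion is exactly \cite[Prop. 3.1]{G4}. A harmless preliminary reduction: since $X$ is infinite-dimensional it has no isolated points, so removing the first term of a dense orbit leaves it dense, whence $x\in HC(T)$ \ifff\ $Tx\in HC(T)$, i.e. $HC(T)$ is totally $T$-invariant; as $m$ is $T$-\inv\ and $m(X\setminus HC(T))=0$, the restriction of $m$ to $HC(T)$ is again a $T$-\inv\ probability measure. So from now on I assume $m$ is a $T$-\inv\ probability measure carried by $HC(T)$.

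The key observation — which really is the whole content of the proposition — is the automatic non-degeneracy of such an $m$. Fix a countable base $(U_{k})_{k\geq 1}$ of non-empty open subsets of $X$. Every $x\in HC(T)$ has dense orbit, hence visits each $U_{k}$, so that $HC(T)\subseteq\bigcup_{n\geq 0}T^{-n}(U_{k})$; using the $T$-invariance of $m$ we obtain
\[
1=m(HC(T))\leq\sum_{n\geq 0}m\bigl(T^{-n}(U_{k})\bigr)=\sum_{n\geq 0}m(U_{k}),
\]
which is absurd unless $m(U_{k})>0$. Since every non-empty open set contains some $U_{k}$, the measure $m$ is \nd.

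It then suffices to quote \cite[Prop. 3.1]{G4}, which yields $m(FHC(T))=1$ for any \nd\ $T$-\inv\ probability measure $m$ with $m(HC(T))=1$; in particular $FHC(T)\neq\emptyset$, so $T$ is \fhy. Should a self-contained argument be preferred, one applies Birkhoff's pointwise ergodic theorem to each $\I_{U_{k}}\in L^{1}(m)$: the frequencies $\frac1N\#\{0\leq n<N : T^{n}x\in U_{k}\}$ converge $m$-almost everywhere to $f_{k}^{*}=\E[\I_{U_{k}}\mid\mathcal I]\geq 0$, where $\mathcal I$ is the $\sigma$-field of $T$-\inv\ Borel sets; since $\{f_{k}^{*}=0\}\in\mathcal I$ one has $m(U_{k}\cap\{f_{k}^{*}=0\})=0$, and then, by the very computation displayed above, $m(HC(T)\cap\{f_{k}^{*}=0\})=0$; intersecting over $k$, $m$-almost every $x\in HC(T)$ has positive lower visit frequency to every $U_{k}$, hence to every non-empty open set, i.e. is \fhy. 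The only point that genuinely has to be seen is that non-degeneracy is a consequence of the hypotheses rather than an extra assumption; beyond that there is no real obstacle.
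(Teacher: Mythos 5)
Your proof is correct and follows essentially the same route as the paper: you first derive non-degeneracy of $m$ from $HC(T)\subseteq\bigcup_{n\geq 0}T^{-n}(U)$ together with $m(T^{-n}(U))=m(U)$, and then run the Birkhoff argument of \cite[Prop. 3.1]{G4} to conclude that $\E(\chi_{U}|\mathcal{I})$ is $m$-almost everywhere positive. The only cosmetic difference is that you phrase the last step as $m(\{f_{k}^{*}=0\})=0$ via the conditional-expectation identity rather than via full support, which amounts to the same computation.
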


\begin{proof}
For any non-empty
open subset $U$ of $X$, $$HC(T)\subseteq \bigcup_{n\geq 0}T^{-n}(U)$$
so that $m(\bigcup_{n\geq 0}T^{-n}(U))=1$. Since $m(U)=m(T^{-n}(U))$ for any $n\geq 1$, it is impossible that $m(U)=0$. So $m(U)>0$, and $m$ actually has full support. The rest of the proof then goes exactly as in \cite[Prop. 3.1]{G4}. We recall the argument for completeness's sake:
 since $m$ is $T$-invariant,
Birkhoff's theorem implies that for $m$-almost every $x$ in $X$,
$$\frac{1}{N}\#\{n\leq N \textrm{ ; } T^{n}x \in U\}\longrightarrow \E(\chi_{U}|\mathcal{I})(x),$$
where $\chi_{U}$ is the characteristic function of the set $U$ and $\mathcal{I}$
the $\sigma $-algebra of $T$-invariant subsets of $(X,\mathcal{B},m)$. Now $\E(\chi_{U}|\mathcal{I})$
is a $T$-invariant function which it is positive almost everywhere on the set
$\bigcup_{n\geq 0}T^{-n}(U)$, which has measure $1$. 
So $\E(\chi_{U}|\mathcal{I})$
is positive almost everywhere, and it follows that
$m$-almost every $x$ is \fhy\ for $T$. 
\end{proof}

In the works \cite{BG2}, \cite{BG3}, \cite{G4}, \inv\ \mea s were constructed using sums of \ind\ \ga\ variables $\sum g_{n}(\omega )x_{n}$, and taking advantage of the rotational invariance of the law of $g_{n}$. It is important here that we consider \st\ variables instead of \ga\ variables, as will be seen shortly.
\par\smallskip
Let us summarize: we are looking for  a sequence $(y_{n})_{n\geq 1}$ of \eve s of $T$,  such that $\Phi(\omega )=\sum_{n\geq 1}\chi_{n}(\omega )y_{n}$ defines an \inv\ \mea\ $m$ such that $m(HC(T))=1$. The construction of such a sequence $(y_{n})_{n\geq 1}$ will be done by induction, and by blocks: at step $k$ we construct the vectors $y_{n}$ for $n\in [s_{k-1}, s_{k}-1]$, where $(s_{k})$ is a certain fast increasing sequence of integers with $s_{0}=1$.
\par\smallskip
Before beginning the construction we state separately one obvious fact, which will be used repeatedly in the forthcoming proof:

\begin{lemma}\label{lem4}
 Let $a$ be a complex number, and $\varepsilon >0$. There exists a finite family $(a_{1},\ldots, a_{N})$ of complex numbers such that
\begin{itemize}
 \item [(i)] $a_{1}+\ldots+a_{N}=a$
\item[(ii)] $|a_{1}|^{2}+\ldots+|a_{N}|^{2}<\varepsilon .$
\end{itemize}
\end{lemma}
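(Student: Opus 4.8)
The plan is to split $a$ into $N$ equal pieces and let $N$ be large. Concretely, I would set $a_{j}=a/N$ for $j=1,\ldots,N$, where $N$ is a positive integer to be chosen later. Then property (i) holds automatically, since $a_{1}+\ldots+a_{N}=N\cdot(a/N)=a$, and the quantity appearing in (ii) is
$$|a_{1}|^{2}+\ldots+|a_{N}|^{2}=\sum_{j=1}^{N}\left|\frac{a}{N}\right|^{2}=N\cdot\frac{|a|^{2}}{N^{2}}=\frac{|a|^{2}}{N}\cdot$$

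It then suffices to choose $N$ to be any integer with $N>|a|^{2}/\varepsilon$, so that $|a|^{2}/N<\varepsilon$ and (ii) is satisfied; if $a=0$ one can simply take $N=1$ and $a_{1}=0$. There is no genuine obstacle here: the only point worth noting is that $N$ is allowed to depend on both $a$ and $\varepsilon$, which the statement permits, and that the number $N$ produced will typically be very large when $\varepsilon$ is small, which is exactly the feature that makes the lemma useful in the block construction of the sequence $(y_{n})_{n\geq 1}$.
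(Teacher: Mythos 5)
Your proof is correct and is essentially identical to the paper's: the paper also sets $a_{i}=a/N$ and chooses $N$ so large that $|a|^{2}/N<\varepsilon$. (In fact you are slightly more careful, since the paper writes $a^{2}/N<\varepsilon$ where $|a|^{2}/N<\varepsilon$ is meant.)
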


\begin{proof}
Just take $N$ so large that $\frac{a^{2}}{N}<\varepsilon $ and set $a_{i}=\frac{a}{N}$ for any $i=1,\ldots, N$.
\end{proof}

\section{Proof of Theorem \ref{th2}: frequent hypercyclicity of $T$ under assumption (H)}

Let $(U_{n})_{n\geq 1}$ be a countable basis of open subsets of $X$, and let $(x_{n})_{n\geq}$ be a sequence of \eve s of $T$, $||x_{n}||=1$, $Tx_{n}=\lambda _{n}x_{n}$, satisfying assumption (H).
\par\smallskip
\textbf{Step 1:} 
Since $T$ is \hy, there exists an integer $p_{1}$ such that $T^{p_{1}}(B(0,\frac{1}{2}))\cap U_{1}$ is non-empty. As the vectors $x_{k}$, $k\geq 1$, span a dense subspace of $X$, there exists a finite linear combination $u_{1}$ of the vectors $x_{k}$ such that $||u_{1}||<\frac{1}{2}$ and $T^{p_{1}}u_{1}\in U_{1}$. Let us write $u_{1}$ as
$$u_{1}=\sum_{k\in I_{1}}\alpha _{k}x_{k}$$ where $I_{1}=[1,r_{1}]$ is a certain finite interval of $[1,+\infty [$. Since the linear space $\textrm{sp}[x_{k}\textrm{ ; } k\in I_{1}]$ is finite-dimensional, there exists a positive constant $M_{1}$ such that for every family $(\beta _{k})_{k\in I_{1}}$ of complex numbers,
$$||\sum_{k\in I_{1}}\beta _{k}x_{k}||\leq M_{1}\left(\sum_{k\in I_{1}}|\beta _{k}|^{2}\right)^{\frac{1}{2}}.$$
Let $\delta _{1}$ be a very small positive number. By Lemma \ref{lem4}, we can write each $\alpha _{k}$, $k\in I_{1}$, as
$$\alpha _{k}=\sum_{j\in J_{k}^{1}}a_{j}^{(k)},$$ where the sets $J_{k}^{1}$, 
$k\in I_{1}$, are successive intervals of $[1,+\infty [$ and
$$\sum_{k \in I_{1}}\left( \sum_{j \in J_{k}^{1}}|a_{j}^{(k)}|^{2}\right)^{\frac{1}{2}}<\delta _{1}.$$
Thus $u_{1}$ can be rewritten as 
$$u_{1}=\sum_{k\in I_{1}}\left(\sum_{j\in J_{k}^{1}}a_{j}^{(k)} \right)x_{k}.$$
Let $\gamma _{1}$ be a very small positive number, to be chosen later on in the proof. Assumption (H) implies that there exists a family $x_{j}^{(k)}$, $k \in I_{1}$, $j\in J_{k}^{1} $, of elements of the set $\{x_{n} \textrm{ ; } n\geq 1\}$ such that for any $k\in I_{1}$ and $j\in J_{k}^{1}$,
$$||x_{k}-x_{j}^{(k)}||<\gamma _{1}$$ and the \eva s $\lambda _{j}^{(k)}$ associated to the eigenvectors $x_{j}^{(k)}$ are all distinct. Hence the arguments $\theta _{j}^{(k)}$ of the \eva s $\lambda _{j}^{(k)}=e^{2i\pi\theta _{j}^{(k)}}$ form a $\Q$-independent sequence of irrational numbers. Set
$$v_{1}=\sum_{k\in I_{1}}\sum_{j\in J_{k}^{1}}a_{j}^{(k)} x_{j}^{(k)}.$$ We have
$$||u_{1}-v_{1}||\leq \sum_{k\in I_{1}}\sum_{j\in J_{k}^{1}}|a_{j}^{(k)}|\, ||x_{j}^{(k)}-x_{k}|| \leq \gamma _{1}\, \sum_{k\in I_{1}}\sum_{j\in J_{k}^{1}}|a_{j}^{(k)}| $$ so that $||u_{1}-v_{1}|| $ can be made arbitrarily small if $\gamma _{1}$ is small enough. Hence taking $\gamma _{1}$ very small we can ensure that $T^{p_{1}}v_{1}$ belongs to $U_{1}$, i.e. that 
$$\sum_{k\in I_{1}}\sum_{j\in J_{k}^{1}}a_{j}^{(k)}(\lambda _{j}^{(k)})^{p_{1}}x_{j}^{(k)}\in U_{1}.$$
Let $(\chi_{j}^{(k)})_{k\in I_{1}, j\in J_{k}^{1}}$ be a family of \ind\ \st\ variables, and define on $(\Omega ,\mathcal{F},\P)$ a random function $\Phi_{1}$ by
$$\Phi_{1}(\omega )=\sum_{k\in I_{1}} \sum_{j\in J_{k}^{1}}\chi_{j}^{(k)}(\omega )\,a_{j}^{(k)}x_{j}^{(k)} .$$
Our aim is now to estimate the expectation $\E||\Phi_{1}(\omega )||$. In order to do this, let us consider the auxiliary random function
$$\Psi_{1}(\omega )=\sum_{k\in I_{1}}\left( \sum_{j\in J_{k}^{1}}\chi_{j}^{(k)}(\omega )\,a_{j}^{(k)}\right)x_{k}.$$ Writing 
$$\beta _{k}(\omega )=\sum_{j\in J_{k}^{1}}\chi_{j}^{(k)}(\omega )\,a_{j}^{(k)},$$ we have
$$||\Psi_{1}(\omega )||\leq M_{1}\, \left(\sum_{k\in I_{1}}|\beta _{k}(\omega )|^{2}\right)^{\frac{1}{2}}\leq M_{1}\,\sum_{k\in I_{1}}|\beta _{k}(\omega )|$$ so that 
$$\E||\Psi_{1}(\omega )||\leq M_{1}\,\sum_{k\in I_{1}}\E\left|\sum_{j\in J_{k}^{1}}\chi_{j}^{(k)}(\omega )\,a_{j}^{(k)}\right|.$$ Now the ``Steinhaus version''of Khinchine inequalities states that there is a universal constant $C>0$ such that for any sequence
$(a_{n})_{n\geq 1}$ of complex numbers, we have for any $N\geq 1$
$$\frac{1}{C}\left(\sum_{n=1}^{N}|a_{n}|^{2}\right)^{\frac{1}{2}}\leq
\E \left|\sum_{n=1}^{N}\chi_{n}(\omega )a_{n}\right|
\leq C\left(\sum_{n=1}^{N}|a_{n}|^{2}\right)^{\frac{1}{2}}.$$
Hence $$\E||\Psi_{1}(\omega )||\leq M_{1}\,C\,\sum_{k\in I_{1}}\left(\sum_{j\in J_{k}^{1}}|a_{j}^{(k)}|^{2}\right)^{\frac{1}{2}}<M_{1}\,C\,\delta _{1}.$$
Hence if $\delta _{1}$ is chosen very small \wrt\ $M_{1}$, we can ensure that $\E||\Psi_{1}(\omega )||<4^{-1}$ for instance. Now
$$||\Phi_{1}(\omega )-\Psi_{1}(\omega )||\leq
\sum_{k\in I_{1}}\sum_{j\in J_{k}^{1}}|a_{j}^{(k)}|\, ||x_{j}^{(k)}-x_{k}|| \leq \gamma _{1}
\sum_{k\in I_{1}}\sum_{j\in J_{k}^{1}}|a_{j}^{(k)}|. $$ Thus if $\gamma _{1}$ is small enough, $\E||\Phi_{1}(\omega )-\Psi_{1}(\omega )||$ is so small that $\E||\Phi_{1}(\omega )||<4^{-1}$ too (recall that $M_{1}$ is chosen first, then $\delta _{1}$ is chosen very small \wrt\ $M_{1}$, and lastly $\gamma _{1}$ is chosen very small \wrt\ $\delta _{1}$).
\par\smallskip
Our next goal is to show that there exists a finite family $\mathcal{P}_{1}$ of integers such that for almost every $\omega \in \Omega $, there exists an integer $p_{1}(\omega )\in \mathcal{P}_{1}$ such that $T^{p_{1}(\omega )}\Phi_{1}(\omega )$ belongs to $U_{1}$.
\par\smallskip
We have for any $p\geq 0$
$$T^{p}\Phi_{1}(\omega )=\sum_{k\in I_{1}} \sum_{j\in J_{k}^{1}}\chi_{j}^{(k)}(\omega )\,(\lambda _{j}^{(k)})^{p}\,a_{j}^{(k)}x_{j}^{(k)}.$$
Let $(\mu_{j}^{(k)})_{k\in I_{1}, j\in J_{k}^{1}}$ be any family of unimodular numbers indexed by the sets $I_{1}$ and $J_{k}^{1}$, $k\in I_{1}$. Since the arguments of the $\lambda _{j}^{(k)}$ are $\Q$-independent irrational numbers, there exists for any $\eta_{1}>0$
 an integer $p\geq 1$ such that for any $k\in I_{1}$ and any $j\in J_{k}^{1}$
$$|(\lambda _{j}^{(k)})^{p}-\mu_{j}^{(k)}|<\frac{\eta_{1}}{2}.$$ Considering a finite $\eta_{1}/2$-net of the set $\T^{\sum|J_{k}^{1}|}$, we obtain that there exists a finite family $\mathcal{Q}_{1}$ of integers such that for almost every $\omega \in \Omega $ there exists an integer $p(\omega )\in \mathcal{Q}_{1}$ such that  for any $k\in I_{1}$ and any $j\in J_{k}^{1}$,
$$|(\lambda _{j}^{(k)})^{p(\omega)}-\overline{\chi_{j}^{(k)}(\omega )}|<\eta_{1}.$$ Now if $\rho  _{1}$ is any positive number, there exists an $\eta_{1}>0$ such that if $p$ is such that
$|{\chi_{j}^{(k)}(\omega )}(\lambda _{j}^{(k)})^{p}-1|<\eta_{1}$ for any $k \in I_{1}$ and $j\in J_{k}^{1}$, then $||T^{p}\Phi_{1}(\omega )-v_{1}||<\rho  _{1}$. Indeed in this case
\begin{eqnarray*}
T^{p}\Phi_{1}(\omega )-v_{1}&=&||\sum_{k\in I_{1}} \sum_{j\in J_{k}^{1}}\left(\chi_{j}^{(k)}(\omega )\,(\lambda _{j}^{(k)})^{p}-1\right)\,a_{j}^{(k)}x_{j}^{(k)}||\\
&\leq&\eta_{1}\,\sum_{k\in I_{1}} \sum_{j\in J_{k}^{1}} |a_{j}^{(k)}|<\rho  _{1}
\end{eqnarray*}
if $\eta_{1}$ is sufficiently small \wrt\ $\rho  _{1}$.
 Choose $\rho  _{1}$ such that $$T^{p_{1}}v_{1}+B(0,\rho_{1}||T||^{p_{1}})\subseteq U_{1},$$ then $\eta_1$ as above, and take $\mathcal{P}_{1}=p_{1}+\mathcal{Q}_{1}$: for almost every $\omega \in \Omega $, there exists a $p(\omega )\in \mathcal{Q}_{1}$ such that $||T^{p(\omega )}\Phi_{1}(\omega )-v_{1}||<\rho  _{1}$. Thus $$||T^{p_{1}+p(\omega )}\Phi_{1}(\omega )-T^{p_{1}}v_{1}||<\rho  _{1}\, ||T||^{p_{1}}$$ so that $T^{p_{1}+p(\omega )}\Phi_{1}(\omega )$ belongs to $U_{1}$. 
\par\smallskip
Let us summarize what has been done in this first step: we have constructed a function $\Phi_{1}(\omega )$ which is a finite \st\ sum of \eve s of $T$ associated to distinct \eva s, such that 

$\bullet$ $\E||\Phi_{1}(\omega )||<4^{-1}$

$\bullet$ there exists  a finite set $\mathcal{P}_{1}$ of integers such that for almost every $\omega \in \Omega $, there exists an integer $p_{1}(\omega )\in \mathcal{P}_{1}$ such that $T^{p_{1}(\omega )}\Phi_{1}(\omega )$ belongs to $U_{1}$. Let $\pi_{1}$ denote the maximum of the set $\mathcal{P}_{1}$.
\par\medskip
\textbf{Step 2:}
Let $V_{2}$ be a non-empty open subset of $X$ and $\kappa _{2}$ be a positive number such that $V_{2}+B(0,2\kappa _{2})\subseteq U_{2}$.
For any $p\geq 0$ and almost every $\omega \in\Omega $ we have
$$T^{p}\Phi_{1}(\omega )-\Phi_{1}(\omega )=\sum_{k\in I_{1}} \sum_{j\in J_{k}^{1}}\chi_{j}^{(k)}(\omega )\,\left((\lambda _{j}^{(k)})^{p}-1\right)\,a_{j}^{(k)}x_{j}^{(k)}.$$
There exists $\eta_{2}>0$ such that if $p$ is in the set $D_{2}$ of integers such that 
$|(\lambda _{j}^{(k)})^{p}-1|<\eta_{2}$ for every $k\in I_{1}$ and every $j\in J_{k}^{1}$, then for almost every $\omega \in\Omega $
$$||T^{p}\Phi_{1}(\omega )-\Phi_{1}(\omega )||<\kappa _{2}.$$ Observe that this set $D_{2}$ has bounded gaps. Indeed there exists a set $D'_{2}$ of positive density such that for any $k\in I_{1}$ and any $j\in J_{k}^{1}$, and for any $p\in D'_{2}$,
$|(\lambda _{j}^{(k)})^{p}-1|<\eta_{2}/2$. Then for any pair $(p,p')$ of elements of $D'_{2}$ we have 
$$|(\lambda _{j}^{(k)})^{p-p'}-1|\leq |(\lambda _{j}^{(k)})^{p}-1|+|(\lambda _{j}^{(k)})^{p}-1|<\eta_{2}.$$ Thus $(D_{2}'-D_{2}')\cap\N$ is contained in $D_{2}$. Since $D'_{2}$ has positive lower density,
$(D_{2}'-D_{2}')\cap\N$ has bounded gaps by a result of \cite{ST}. Hence $D_{2}$ has bounded gaps too. Let $r_{2}$ be such that any interval of $\N$ of length strictly larger than $r_{2}$ contains an element of $D_{2}$.
\par\smallskip
Now consider the set $E_{2}=\{p\geq 0 \textrm{ ; } T^{p}(B(0,2^{-2}))\cap V_{2}\not =\varnothing\}$. Since $T$ is \hy, $E_{2}$ is non-empty. But we can actually say more about $E_{2}$: as $T$ is \hy\ and has spanning unimodular \eve s, $T$ satisfies the \HC\ by \cite{G6}. Hence for any $r\geq 1$, the \op\ $T_{r}$ which is a direct sum of $r$ copies of $T$ on the direct sum $X_{r}$ of $r$ copies of $X$ is \hy. In particular $T_{r_{2}+1}$ is topologically transitive, which implies that there exists an integer $p$ such that $T^{p}(B(0,2^{-2}))\cap V_{2}\not =\varnothing,\;
T^{p}(B(0,2^{-2}))\cap T^{-1}(V_{2})\not =\varnothing,\ldots,
T^{p}(B(0,2^{-2}))\cap T^{-r_{2}}(V_{2})\not =\varnothing$. In other words $p,p+1,\ldots, p+r_{2}$ belong to $E_{2}$. Hence $E_{2}\cap D_{2}$ is non-empty. Let $p_{2}\in E_{2}\cap D_{2}$:
$$||T^{p_{2}}\Phi_{1}(\omega )-\Phi_{1}(\omega )||<\kappa _{2}\quad \textrm{ for almost every } \omega \in\Omega ,$$ and 
$$T^{p_{2}}(B(0,2^{-2}))\cap V_{2}\not =\varnothing.$$
\par\smallskip
Let $F_{1}=\{\lambda _{j}^{(k)} \textrm{ ; } k\in I_{1}, j\in J_{k}^{1}\}$ be the set of \eva s which appear in Step 1 of the construction, and $A_{F_{1}}=\{k\geq 1 \textrm{ ; } \lambda _{k}\not \in F_{1}\}$. As $\textrm{sp}[x_{k} \textrm{ ; } k\in A_{F_{1}}]$ is dense in $X$, there exists a vector $u_{2}$ which is a finite linear combination of vectors $x_{k}$, $k\in A_{F_{1}}$, such that $T^{p_{2}}u_{2}\in V_{2}.$ We write $$u_{2}=\sum_{k\in I_{2}}\alpha _{k}x_{k},$$ where $I_{2}$ is a suitably chosen interval of $\N$. Let $M_{2}>0$ be such that 
for every family $(\beta _{k})_{k\in I_{2}}$ of complex numbers,
$$||\sum_{k\in I_{2}}\beta _{k}x_{k}||\leq M_{2}\left(\sum_{k\in I_{2}}|\beta _{k}|^{2}\right)^{\frac{1}{2}}.$$
Then as in Step 1 we decompose each $\alpha _{k}$, ${k\in I_{2}}$, as
$$\alpha _{k}=\sum_{j\in J_{k}^{2}}a_{j}^{(k)},$$ where 
$$\sum_{k \in I_{2}}\left( \sum_{j \in J_{k}^{(2)}}|a_{j}^{(k)}|^{2}\right)^{\frac{1}{2}}<\delta _{2}$$ and $\delta _{2}$ is a very small positive number, determined later on in the construction.
Thus
$$u_{2}=\sum_{k\in I_{2}}\left(\sum_{j\in J_{k}^{2}}a_{j}^{(k)} \right)x_{k}.$$
For any $\gamma _{2}>0$, there exists a family $x_{j}^{(k)}$, $k\in I_{2}$, 
$j\in J_{k}^{2}$ of elements of the set $\{x_{n} \textrm{ ; } n\geq 1\}$ such that $||x_{k}-x_{j}^{(k)}||<\gamma _{2}$ for any $k\in I_{2}$ and
$j\in J_{k}^{2}$ and the \eva s $\lambda _{j}^{(k)}$ associated to the \eve s $x_{j}^{(k)}$ are all distinct and distinct from the elements of $F_{1}$ (i.e. the \eva s involved at Step 1 of the construction). Hence all the arguments $\theta _{j}^{(k)}$ of the eigenvalues $\lambda _{j}^{(k)}=e^{2i\pi\theta _{j}^{(k)}}$, $k\in I_{1}$ and $j\in J_{k}^{1}$, $k\in I_{2}$ and $j\in J_{k}^{2}$, form a $\Q$-independent sequence of irrational numbers. Set
$$v_{2}=\sum_{k\in I_{2}}\left(\sum_{j\in J_{k}^{2}}a_{j}^{(k)} \right)x_{j}^{(k)}.$$ If $\gamma _{2}$ is small enough, we have $T^{p_{2}}v_{2}\in V_{2}$. 
Let $(\chi_{j}^{(k)})_{k\in I_{2}, j\in J_{k}^{2}}$ be a family of \ind\ \st\ variables which are \ind\ from the family 
$(\chi_{j}^{(k)})_{k\in I_{1}, j\in J_{k}^{1}}$, and set 
$$\Phi_{2}(\omega )=\sum_{k\in I_{2}} \sum_{j\in J_{k}^{2}}\chi_{j}^{(k)}(\omega )\,a_{j}^{(k)}x_{j}^{(k)} .$$ The same reasoning as in Step 1 shows that if we take first $\delta _{2}$ very small \wrt\ $M_{2}$, and then $\gamma _{2}$ very small \wrt\ $\delta _{2}$, we can ensure that $\E||\Phi_{2}(\omega )||$ is as small as we want, namely that $$\E||\Phi_{2}(\omega )||<\frac{4^{-2}}{||T||^{\pi_{1}}}\cdot$$
\par\smallskip
We are now going to show that there exists  a finite family $\mathcal{P}_{2}$ of integers such that for almost every $\omega \in \Omega $, there exists $p_{2}(\omega )\in \mathcal{P}_{2}$ such that $$T^{p_{2}(\omega )}(\Phi_{1}(\omega )+\Phi_{2}(\omega ))-\Phi_{1}(\omega )\in U_{2}.$$
Indeed for any $p\geq 0$ we have 
\begin{eqnarray*}
 T^{p}(\Phi_{1}(\omega )+\Phi_{2}(\omega ))-\Phi_{1}(\omega )-v_{2}&=&
\sum_{k\in I_{1}} \sum_{j\in J_{k}^{1}}\chi_{j}^{(k)}(\omega )\,\left((\lambda _{j}^{(k)})^{p}-1\right)\,a_{j}^{(k)}x_{j}^{(k)}\\
&+&
\sum_{k\in I_{2}} \sum_{j\in J_{k}^{2}}\left(\chi_{j}^{(k)}(\omega )\,(\lambda _{j}^{(k)})^{p}-1\right)\,a_{j}^{(k)}x_{j}^{(k)}.
\end{eqnarray*}
Let $\eta_{2}>0$. By the irrationality and the $\Q$-independence of the arguments of all the $\lambda _{j}^{(k)}$ involved in the expression above, there exists a finite family $\mathcal{Q}_{2}$ of integers such that for almost every $\omega \in\Omega $ there exists an integer $p(\omega )\in \mathcal{Q}_{2}$ such that  

-- for every
$k\in I_{1}$ and $j\in J_{k}^{1}$, $|(\lambda _{j}^{(k)})^{p(\omega )}-1|<\eta_{2}$,

and 

-- for every $k\in I_{2}$ and $j\in J_{k}^{2}$, $|(\lambda _{j}^{(k)})^{p(\omega )}-\overline{\chi_{j}^{(k)}(\omega )}|<\eta_{2}$ .

Thus if $\eta_{2}$ is small enough, 
$$||T^{p(\omega )}(\Phi_{1}(\omega )+\Phi_{2}(\omega ))-\Phi_{1}(\omega )-v_{2}||<\frac{\kappa_{2}}{||T||^{p_{2}}}\cdot$$ Then
$$||T^{p(\omega )+p_{2}}(\Phi_{1}(\omega )+\Phi_{2}(\omega ))-T^{p_{2}}\Phi_{1}(\omega )-T^{p_{2}}v_{2}||<{\kappa_{2}}.$$ But 
$$||T^{p_{2}}\Phi_{1}(\omega )-\Phi_{1}(\omega )||<\kappa _{2}, $$ so that
$$||T^{p(\omega )+p_{2}}(\Phi_{1}(\omega )+\Phi_{2}(\omega ))-\Phi_{1}(\omega )-v_{2}||<{2\kappa_{2}}.$$
Hence if $\mathcal{P}_{2}=p_{2}+\mathcal{Q}_{2}$, using the fact that $T^{p_{2}}v_{2}\in V_{2}$ and $V_{2}+B(0,2\kappa _{2})\subseteq U_{2}$, we get that for almost every $\omega \in\Omega $ there exists $p_{2}(\omega )\in \mathcal{P}_{2}$ such that 
$$T^{p_{2}}(\Phi_{1}(\omega )+\Phi_{2}(\omega ))-\Phi_{1}(\omega )\in U_{2}.$$ Let $\pi_{2}$ denote the maximum of the set $\mathcal{P}_{2}$.
\par\medskip
\textbf{Step n:}
Continuing in this way, we construct at step $n$ a random \st\ function
$$\Phi_{n}(\omega )=\sum_{k\in I_{n}} \sum_{j\in J_{k}^{n}}\chi_{j}^{(k)}(\omega )\,a_{j}^{(k)}x_{j}^{(k)} $$ such that

$\bullet$ we have $$\E||\Phi_{n}(\omega )||<\frac{4^{-n}}{||T||^{\max(\pi_{1},\ldots, \pi_{n-1})}} \quad \textrm{ in particular }\quad  \E||\Phi_{n}(\omega )||<4^{-n}$$
\par\smallskip
$\bullet$ there exists  a finite family $\mathcal{P}_{n}$ of integers such that for almost every $\omega \in \Omega $, there exists $p_{n}(\omega )\in \mathcal{P}_{n}$ such that $$T^{p_{n}(\omega )}\left(\Phi_{1}(\omega )+\Phi_{2}(\omega )+\ldots+\Phi_{n}(\omega )\right)-\left(\Phi_{1}(\omega )+\ldots+\Phi_{n-1}(\omega )\right)\in U_{n}.$$ We denote by $\pi_{n}$ the maximum of the set $\mathcal{P}_{n}$.
\par\smallskip
All the \st\ variables $\chi_{j}^{(k)}$, $k\in I_{m}$, $j\in J_{k}^{m}$ with $m\leq n$ are independent, and the numbers $p_{n}(\omega )$ depend only on the construction until step $n$. In other words, $p_{n}$ is $\mathcal{F}_{n}$-measurable, where $\mathcal{F}_{n}$ denotes the $\sigma $-algebra generated by the variables $\chi_{j}^{(k)}$, $k\in I_{m}$, $j\in J_{k}^{m}$,  $m\leq n$.

\par\medskip
\textbf{Construction of the invariant measure:} We are now ready to construct our function $\Phi$. Set
$$\Phi(\omega )=\sum_{n\geq 1} \Phi_{n}(\omega )=\sum_{n\geq 1} \left(\sum_{k\in I_{n}} \sum_{j\in J_{k}^{n}}\chi_{j}^{(k)}(\omega )\,a_{j}^{(k)}x_{j}^{(k)}\right)$$
Since 
$$\E||\Phi(\omega )||\leq \sum_{n\geq 1} \E||\Phi_{n}(\omega )||\leq \sum_{n\geq 1} 4^{-n}<+\infty ,$$ the series of \st\ variables written above has a subsequence of partial sums which converges in $L^{1}(\Omega , \mathcal{F},\P ; X)$, and hence by L\'evy's inequalities the series defining $\Phi$ converges almost everywhere.
\par\smallskip
Recall that if we define $m$ by $m(A)=\P(\Phi\in A)$ for any Borel subset $A$ of $X$, $m$ is $T$-\inv\ since all the vectors $x_{j}^{(k)}$ are unimodular \eve s for $T$. We are going to show that $\Phi(\omega )$ is \hy\ for $T$ for almost every $\omega \in \Omega $, and this will conclude the proof of Theorem \ref{th2}.
\par\smallskip
For almost every $\omega \in \Omega $ we can write for every $n\geq 1$
\begin{eqnarray*}
T^{p_{n}(\omega )}\Phi(\omega )-\Phi(\omega )&=&
\left(T^{p_{n}(\omega )}\left(\sum_{m\leq n}\Phi_{m}(\omega )\right)-
\sum_{m< n}\Phi_{m}(\omega )\right)\\
&+&T^{p_{n}(\omega )}\left(\sum_{m> n}\Phi_{m}(\omega )\right)-
\sum_{m\geq n}\Phi_{m}(\omega ). 
\end{eqnarray*}
We know that for almost every $\omega \in \Omega $, the first term in this expression belongs to $U_{n}$. So we have to estimate the second and third terms. Let us begin with the third one:
$$\E||\sum_{m\geq n}\Phi_{m}(\omega )||\leq \sum_{m\geq n}4^{-m}=\frac{4}{3}4^{-n}.$$ By Markov's inequality
$$\P\left( ||\sum_{m\geq n}\Phi_{m}(\omega )||>2^{-n}\right)\leq \frac{4}{3}\,2^{-n},\quad \textrm{i.e.}\quad \P\left( ||\sum_{m\geq n}\Phi_{m}(\omega )||\leq 2^{-n}\right)\geq 1-\frac{4}{3}\,2^{-n}.$$
Hence the third term in the display above is small with large probability. As for the second term, we have

\begin{eqnarray*}
\E\left|\left|\sum_{m> n} T^{p_{n}(\omega )}\Phi_{m}(\omega )\right|\right|&\leq&
\sum_{m> n} \E\left|\left|T^{p_{n}(\omega )}\Phi_{m}(\omega )\right|\right|
\leq \sum_{m> n} \E(||T||^{p_{n}(\omega )}.||\Phi_{m}(\omega )||)\\
&\leq& \sum_{m> n} ||T||^{\pi_{n}}\E||\Phi_{m}(\omega )||
\end{eqnarray*}
since $\pi_{n}=\sup\{p_{n}(\omega ) \textrm{ ; } \omega \in \Omega \}$. Now since $m\geq n+1$,
$$\E||\Phi_{m}(\omega )||\leq \frac{4^{-m}}{||T||^{\max(\pi_{1},\ldots, \pi_{m-1})}}\leq
\frac{4^{-m}}{||T||^{\pi_{n}}}$$
so that
$$\E\left|\left|\sum_{m> n} T^{p_{n}(\omega )}\Phi_{m}(\omega )\right|\right|\leq \sum_{m>n} 4^{-m}=\frac{1}{3}
4^{-n}.$$
Thus
$$\P\left( ||\sum_{m> n}T^{p_{n}(\omega )}\Phi_{m}(\omega )||\leq 2^{-n}\right)\geq 1-\frac{1}{3}\,2^{-n}.$$
Putting everything together yields that for every $n\geq 1$,
$$\P\left( T^{p_{n}(\omega )}\Phi(\omega )-\Phi(\omega )\in U_{n}+B(0,2^{-(n-1)})\right)\geq 1-\frac{5}{3}\,2^{-n}.$$
We are now done: let $U$ be any non-empty open subset of $X$, and $(n_{l})_{l\geq 1}$ a sequence of integers such that $U_{n_{l}}+B(0,2^{-(n_{l}-1)})\subseteq U$. Let 
$A_{n_{l}}=\{ \omega \in \Omega  \textrm{ ; } T^{p_{n_{l}(\omega )}}\Phi(\omega )-\Phi(\omega ) \in U \}$: we have seen that $\P(A_{n_{l}})\geq 1-\frac{5}{3}\,2^{-n_{l}}$. 
If
$$A=\{\omega \in \Omega  \textrm{ ; there exists } l\geq 1
\textrm{ such that } T^{p_{n_{l}}(\omega )}\Phi(\omega )-\Phi(\omega )\in U\}=\bigcup_{l\geq 1}A_{n_{l}},$$ then $\P(A)\geq \sup_{l\geq 1}\P(A_{n_{l}})$ and thus $\P(A)=1$.
This being true for any non empty open subset of $X$, by considering a countable basis of open subsets of $X$ we obtain that for almost every $\omega \in\Omega $ the set 
$\{T^{p}\Phi(\omega )-\Phi(\omega ) \textrm{ ; } p\geq 1\}$ is dense in $X$. This means that $\Phi(\omega )$ is \hy\ for almost every $\omega \in\Omega $, and this concludes the proof of Theorem \ref{th2}.

\par\medskip
\begin{remark}
 Suppose that $X$ is a Hilbert space, and  for $n\geq 1$ and $k\in I_n$, $j\in J_{k}^{n}$, denote by $y_{j}^{(k)}$ the vector $y_{j}^{(k)}=a_{j}^{(k)}x_{j}^{(k)}$. Then $\sum_{n\geq 1}\sum_{k\in I_n}\sum_{j\in J_k^n}
||y_{j}^{(k)}||^2$ is finite, and the proof above shows that the set of finite linear combinations $\sum_n\sum_{k\in I_n}\sum_{j\in J_k^n}
c_{j}^{(k)}y_{j}^{(k)}$ where $|c_{j}^{(k)}|=1$ is dense in $X$. This can be related to the following result of \cite{Ba}, which gives conditions on a sequence $(x_n)$ of vectors implying that the set of its linear combinations with unimodular coefficients is dense in $X$: if 
$\sum||x_n||^2$ is finite and $\sum|\pss{x}{x_n}|=+\infty$ for any non-zero $x$ in $X$, then
$\{\sum c_n
x_n\textrm{ ; }  |c_n|=1\}$ is dense in $X$. See \cite{BM} for an elegant proof of this fact. The simplest way to construct such a sequence $(x_n)$ is to take $x_n=\frac{1}{n}x_0$ with $x_0\not =0$ for a large number of $n$, let us say $n< n_1$, then
$x_n=\frac{1}{n}x_{n_1}$ for a large number of $n$'s with another suitable $x_{n_1}$, etc... A look at the proof of Theorem \ref{th2} shows that this is exactly what we do there: we ``duplicate'' each vector $x_k$ in a family of \eve s $x_{j}^{(k)}$, $j\in J_k^n$, associated to \eva s which are very close to the initial one but all distinct, with multiplicative coefficients $a_{j}^{(k)}$, and $\sum_{j\in J_k} |a_{j}^{(k)}|^2$ small but $\sum_{j\in J_k} |a_{j}^{(k)}|$ large.
\end{remark}

\section{Proof of Theorem \ref{th1bis}: frequent hypercyclicity of operators with perfectly spanning unimodular eigenvectors}

In order to prove Theorem \ref{th1bis}, it remains to show that assumption (H) is satisfied when the unimodular \eve s of $T$ are perfectly spanning. We are going to show that this follows from the (seemingly) weaker assumption that whenever $D$ is a countable subset of $\T$,
$\textrm{sp}[\ker(T-\lambda ) \textrm{ ; } \lambda \in \T \setminus D]$ is dense in $X$. This assumption comes from the pioneering work of Flytzanis \cite{Fl}, where the \erg\ theory of bounded \ops\ on Hilbert spaces was first studied. We prove that this condition is equivalent to the property that $T$ has perfectly spanning unimodular \eve s, and even to the stronger property that the unimodular \eve s of $T$ can be parametrized via countably many continuous eigenvector fields. In the statement of Theorem \ref{prop0}, 
$S_{X}=\{x\in X \textrm{ ; } ||x||=1\}$ denotes the unit sphere of $X$.

\begin{theorem}\label{prop0}
 Suppose that $T$ is a bounded \op\ on $X$. The following assertions are equivalent:
\begin{itemize}
 \item[(1)] for any countable subset $D$ of $\T$,
$\textrm{sp}[\ker(T-\lambda ) \textrm{ ; } \lambda \in \T \setminus D]$ is dense in $X$;
 
 \item[(2)] $T$ has perfectly spanning unimodular \eve s;
 
 \item[(3)] there exists a sequence $(K_{i})_{i\geq 1}$ of subsets of $\T$ which are homeomorphic to the Cantor set $2^{\omega }$ and a sequence $(E_{i})_{i\geq 1}$ of continuous functions $E_{i}:K_{i}\longrightarrow S_{X}$ such that:
 \begin{itemize}
 \item for any $i\geq 1$ and any $\lambda \in K_{i}$, $TE_{i}(\lambda )=\lambda E_{i}(\lambda )$;
 
 \item $\textrm{sp}[E_{i}(\lambda ) \textrm{ ; } i\geq 1, \lambda \in K_{i}]$ is dense in $X$.
\end{itemize}
\end{itemize}
\end{theorem}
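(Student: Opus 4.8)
The plan is to prove the cycle $(3)\Rightarrow(2)\Rightarrow(1)\Rightarrow(3)$, the first two implications being soft and the last one carrying essentially all the content. For $(2)\Rightarrow(1)$: a continuous (atomless) probability measure $\sigma$ witnessing perfect spanning assigns measure $0$ to any countable $D\subseteq\T$, so $\sigma(\T\setminus D)=1$, and the conclusion of $(1)$ is Definition~\ref{def1} applied to $A=\T\setminus D$. For $(3)\Rightarrow(2)$: each $K_i$, being a copy of $2^\omega$, carries an atomless Borel probability measure $\sigma_i$ of full support; set $\sigma=\sum_{i\ge1}2^{-i}\sigma_i$, a continuous probability measure on $\T$. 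If $\sigma(A)=1$ then $\sigma_i(A)=1$ for each $i$, hence $A\cap K_i$ is dense in $K_i=\supp\sigma_i$, so by continuity of $E_i$ the set $\{E_i(\lambda):\lambda\in A\cap K_i\}$ is dense in $\{E_i(\lambda):\lambda\in K_i\}$; passing to closed spans and taking the union over $i$ shows $\textrm{sp}[\ker(T-\lambda):\lambda\in A]$ is dense. (One could equally prove $(3)\Rightarrow(1)$ directly, since a Cantor set minus a countable set stays dense in it.)

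The heart of the matter is $(1)\Rightarrow(3)$. I would work with the eigenvector bundle $\Sigma=\{(\lambda,x)\in\T\times S_X:Tx=\lambda x\}$, a closed subset of the Polish space $\T\times X$, hence Polish, with the continuous coordinate projections $\pi_1:\Sigma\to\T$ and $\pi_2:\Sigma\to S_X$. If $\mathcal C\subseteq\Sigma$ is a compact set on which $\pi_1$ is injective, then $\pi_1|_{\mathcal C}$ is a homeomorphism onto its image and $E_{\mathcal C}:=\pi_2\circ(\pi_1|_{\mathcal C})^{-1}$ is a continuous unimodular eigenvector field over the compact set $\pi_1(\mathcal C)\subseteq\T$; when $\mathcal C\cong 2^\omega$ so is $\pi_1(\mathcal C)$. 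The main tool is a classical Cantor extraction lemma: if $f:P\to Q$ is continuous with $P$ Polish, $Q$ metric, and $f(P)$ uncountable, then there is a set $\mathcal C\subseteq P$ homeomorphic to $2^\omega$ on which $f$ is injective (so $f(\mathcal C)$ is a Cantor set). I would prove this by first discarding the points of $P$ having a neighbourhood with countable image — by the Lindelöf property their union has countable image — and then running a Cantor scheme on the closed remainder, using at each node the uncountability of the local image to split into two open pieces with disjoint images, both still of uncountable image thanks to the preliminary reduction.

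With this in hand, let $F$ be the closed linear span of $\bigcup\{\pi_2(\mathcal C):\mathcal C\subseteq\Sigma\text{ a Cantor set on which }\pi_1\text{ is injective}\}$; since every vector in sight is a unimodular eigenvector of $T$, $F$ is a closed $T$-invariant subspace. Consider $D_0=\pi_1\bigl(\{(\lambda,x)\in\Sigma:x\notin F\}\bigr)$. The set $\Sigma\setminus(\T\times F)$ is open in $\Sigma$, hence Polish, and if its image $D_0$ under $\pi_1$ were uncountable the extraction lemma would produce a Cantor set $\mathcal C\subseteq\Sigma\setminus(\T\times F)$ on which $\pi_1$ is injective, giving $\varnothing\neq\pi_2(\mathcal C)\subseteq F$ while at the same time $\pi_2(\mathcal C)\cap F=\varnothing$ — a contradiction. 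Hence $D_0$ is countable. Then $\ker(T-\lambda)\subseteq F$ for every $\lambda\in\T\setminus D_0$, so applying hypothesis $(1)$ with $D=D_0$ forces $F$ to be dense, hence $F=X$. Finally, separability of $X$ lets one select from the defining family of $F$ countably many Cantor sets $\mathcal C_i$ on which $\pi_1$ is injective and whose eigenvector images already span a dense subspace; then $K_i=\pi_1(\mathcal C_i)$ and $E_i=E_{\mathcal C_i}$ witness $(3)$.

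The main obstacle is the extraction lemma and, above all, its application to $\Sigma\setminus(\T\times F)$: one has to be sure the Cantor recursion never gets stuck, i.e.\ that after each split both branches retain an uncountable image, and this is exactly what the passage to "condensation points" guarantees. Everything else — that $F$ is closed and $T$-invariant, that $\ker(T-\lambda)\subseteq F$ off $D_0$, the measure-theoretic details of $(3)\Rightarrow(2)$, and the final separability extraction — is routine.
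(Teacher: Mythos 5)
Your proof is correct, and for the hard implication $(1)\Rightarrow(3)$ it takes a genuinely different route from the paper's. The paper argues directly on the set $A$ of norm-one unimodular eigenvectors: using a countable basis of open subsets of $A$ it first removes a countable exceptional set $\Delta$ of eigenvalues (those whose unit eigenvectors only meet basic open sets seeing countably many eigenvalue fibres), applies hypothesis (1) with $D=\Delta$ to obtain a dense sequence $(u_i)$ of eigenvectors with distinct eigenvalues outside $\Delta$, and then, for each $i$, runs an explicit Cantor scheme with geometric decay on both eigenvectors and eigenvalues to build $K_i$ and $E_i$ with $E_i(\lambda_i)=u_i$; the spanning condition in (3) is then immediate because each field is anchored at a member of a dense sequence. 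You instead work on the eigenvector bundle $\Sigma\subseteq\T\times S_X$, isolate a general extraction lemma (a continuous map on a Polish space with uncountable image restricts injectively to a copy of $2^\omega$), and obtain density by the $F$/$D_0$ argument: the closed span $F$ of all Cantor-field images contains $\ker(T-\lambda)$ for all $\lambda$ outside the countable set $D_0$, so (1) forces $F=X$, and separability extracts countably many fields. Your condensation-point reduction inside the lemma plays exactly the role of the paper's $\Delta$, so the two proofs share the same underlying idea, but your packaging is more modular and general (the eigenvalue-specific part shrinks to a few lines), while the paper's hands-on induction is self-contained and yields the extra information that each field passes through a prescribed eigenvector $u_i$, which makes the spanning statement automatic. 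The only point in your sketch requiring the usual care is the Cantor scheme itself: at each split choose the two sub-pieces with shrinking diameters and with \emph{closures} of their images under $f$ disjoint, so that branch limits exist, the map from $2^\omega$ is injective, and $f$ remains injective on the resulting Cantor set; this is routine and corresponds to the bookkeeping the paper carries out explicitly.
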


Assuming for the moment that Theorem \ref{prop0} is proved, let us deduce Theorem \ref{th1bis} from it.

\begin{proof}[Proof of Theorem \ref{th1bis}]
As $T$ has perfectly spanning unimodular vectors, assertion (3) of Theorem \ref{prop0} above is satisfied. Since for each $i\geq 1$ the set $K_{i}$ is a Cantor-like subset of $\T$, we can construct a family of sequences of unimodular numbers $(\lambda _{n}^{(i)})_{n\geq 1}$, $i\geq 1$, which have the following properties:
 
-- for each $i\geq 1$, the set $\{\lambda _{n}^{(i)} \textrm{ ; } n\geq 1\}$ is dense in $K_{i}$;
 
-- all the numbers $\lambda _{n}^{(i)}$, $i,n\geq 1$, are distinct;
 
-- for any finite family $(\lambda _{n_{1}}^{(i_{1})},\ldots, \lambda _{n_{r}}^{(i_{r})})$ consisting of distinct elements, the arguments of these unimodular numbers consist of $\Q$-independent irrational numbers.

\par\smallskip
 
For each $i,n\geq 1$, let $x_{n}^{(i)}=E_{i}(\lambda _{n}^{(i)})$ denote the associated \eve\ via the \eve\ field $E_{i}$. 
\par\smallskip
It is now not difficult to see that the family $\{x_{n}^{(i)} \textrm{ ; }i,n\geq 1\}$ satisfies the requirements of assumption (H). First of all assertion (1) is true by construction. Then for any $i\geq 1$, any  $\lambda \in K_{i}$ can be written as a limit of a sequence of elements of the set $\{\lambda _{n}^{(i)} \textrm{ ; } n\geq 1\}$. The continuity of the function $E_{i}$ then implies that $E_{i}(\lambda )$ can be written as a limit of a sequence of elements of the set of vectors $\{x_{n}^{(i)} \textrm{ ; } n\geq 1\}$. Since the vectors $E_{i}(\lambda ) \textrm{ ; } i\geq 1, \lambda \in K_{i}$, span a dense subspace of $X$, it follows that  $\textrm{sp}[x_{n}^{(i)} \textrm{ ; } i,n\geq 1]$ is dense in $X$, hence that assertion (2) of assumption (H) is satisfied. Assertion (3) is again a consequence of the continuity of the \eve\ fields $E_{i}$: for any $i,n\geq 1$, $\lambda _{n}^{(i)}$ can be written as the limit of a sequence of distinct elements $(\lambda _{n_{k}}^{(i)})_{k\geq 1}$, which can in particular be chosen so as to avoid a given finite subset $F$ of $\T$. Then $x_{n}^{(i)}$ is the limit of the sequence $(x _{n_{k}}^{(i)})_{k\geq 1}$.
\par\smallskip
So $T$ satisfies assumption (H). Since $T$ is \hy\ \cite{BG2}, it follows from Theorem \ref{th2} that $T$ is \fhy, and thus Theorem \ref{th1bis} is proved.
\end{proof}

It remains now to prove Theorem \ref{prop0}.

\begin{proof}[Proof of Theorem \ref{prop0}]
$\textrm{(3)} \implies \textrm{(2)}$ is easy: for any $i\geq 1$ let $\sigma  _{i}$ be a continuous probability \mea\ supported on the compact set $K_{i}$, and let $\sigma  $ be the probability \mea\ $\sigma  =\sum_{i\geq 1} 2^{-i} \sigma  _{i}$. Then $\sigma  $ is continuous on $\T$. If $B$ is any $\sigma  $-measurable subset of $\T$ such that $\sigma  (B)=1$, then $\sigma  _{i}(B)=1$ for any $i\geq 1$. Suppose now that $x^{*}\in X^{*}$ is a functional which vanishes on $E_{i}(\lambda )$ for any $i\geq 1$ and any $\lambda \in K_{i}\cap B$: since $E_{i}$ is continuous on $K_{i}$, $\pss{x^{*}}{E_{i}(\lambda )}=0$ for any $\lambda \in K_{i}$, and thus $x^{*}=0$. Hence the \eve\ fields $E_{i}$, $i\geq 1$, are perfectly spanning \wrt\ $\sigma  $.
\par\smallskip
$\textrm{(2)} \implies \textrm{(1)}$ is also clear: if the unimodular \eve s of $T$ are perfectly spanning \wrt\ a continuous \mea\ $\sigma  $ on $\T$, then $\sigma  (D)=0$ for any countable subset $D$ of $\T$, so that (1) holds true.
\par\smallskip
$\textrm{(1)} \implies \textrm{(3)}$ is  the core of the proof of Theorem \ref{prop0}.
Let
$$A=S_{X}\cap \left(\bigcup_{\lambda \in\T} \ker(T-\lambda )\right)$$  be the set of \eve s of $T$ of norm $1$ associated to unimodular \eva s. Since 
$A$ is \sep, there exists a countable basis $(\Omega _{n})_{n\geq 1}$ of open subsets of $A$:
$\Omega _{n}=A\cap U_{n}$, where $U_{n}$ is open in $X$. Consider the set
$E$ of integers $n\geq 1$ having the following property: the set of eigenvalues $\lambda$  
such that $\Omega _{n}$ contains an element of $S_{X}\cap \ker(T-\lambda )$  is at most countable. Then let $\Delta  $ be the set of \eva s of $T$ such that there exists an $n\in E$ such that
$S_{X}\cap \ker(T-\lambda )\cap\Omega _{n}$ is non-empty. In other words $\lambda $ belongs to $\Delta  $ if and only if there is an \eve\ associated to $\lambda $  belonging to an $\Omega _{n}$ containing only \eve s associated to a countable family of \eva s:
$$\Delta  =\bigcup_{n\in E}\{\lambda \in \T \textrm{ ; } S_{X}\cap\ker(T-\lambda )\cap\Omega _{n}\not = \varnothing\}.$$
By the definition of $E$, 
$\Delta  $ is at most countable. Let $\lambda \in\T\setminus \Delta  $ be an \eva\ of $T$,  and let $x$ be an associated \eve\ of norm $1$. Let $V$ be an open neighborhood of $x$ in $A$, and let $p\geq 1$ be such that $\Omega _{p}\subseteq V $ and $x\in \Omega _{p}$. It is impossible that $p\in E$: if $p\in E$, then $x\in\ker(T-\lambda )\cap S_{X}\cap \Omega _{p}$ which is hence non-empty, and thus $\lambda$ belongs to $\Delta  $, which is contrary to our assumption. Hence $\Omega _{p}$, and so $V$, contain \eve s  of norm $1$ associated to an uncountable family of unimodular \eva s. Let us summarize this as follows: the set $$\Omega =S_{X}\cap \left(\bigcup_{\lambda \in \T\setminus \Delta  }\ker(T-\lambda )\right)$$
consists of \eve s of $T$ of norm $1$ such that any neighborhood of a vector $x\in \Omega $ contains \eve s of norm $1$ associated to uncountably many \eva s, in particular \eve s of norm $1$ associated to uncountably many \eva s not belonging to $\Delta   $.
\par\smallskip
Since $\Delta  $ is countable, $\textrm{sp}[\ker(T-\lambda ) \textrm{ ; } \lambda \in\T\setminus \Delta  ]=\textrm{sp}[\Omega ]$ is dense in $X$ by assumption (1). We choose  a sequence  $(u_{i})_{i\geq 1}$ of vectors of $\Omega $ which is dense in $\Omega $ and which is such that the corresponding \eva s $\lambda _{i}, i\geq 1$ are all distinct and belong to $\T\setminus \Delta  $. In particular the vectors $u_i$ span a dense subspace of $X$. Let us now fix $i\geq 1$ and construct $K_{i}$ and $E_{i}$. Let $s=(s_{1},\ldots, s_{n})\in 2^{<\omega }$ be a finite sequence of $0$'s and $1$'s. We associate to each such $s\in 2^{<\omega }$ an \eva\ $\lambda _{s}\in\{\lambda _{j}\textrm{ ; } j\geq 1\}$ and an \eve\ $u_{s}\in\{u_{j}\textrm{ ; } j\geq 1\}$ with $Tu_{s}=\lambda _{s}u_{s}$ in the following way:
\par\smallskip
$\bullet $ Step $1$: we start from $u_{(0)}=u_{i}$ and $\lambda _{(0)}=\lambda _{i}$. Let $n\not =i$ be such that
$||u_{n}-u_{(0)}||<2^{-1}$ and $|\lambda _{n}-\lambda _{(0)}|<2^{-1}$ (remark that if $||u_{n}-u_{(0)}||<2^{-1}$ is very small, $|\lambda _{n}-\lambda _{(0)}|<2^{-1}$ is automatically very small too). In particular $\lambda _{n}\not =\lambda _{(0)}$. We set $u_{(1)}=u_{n}$ and
$\lambda _{(1)}=\lambda _{n}$.
\par\smallskip
$\bullet $ Step $2$: we take $u_{(0,0)}=u_{(0)}$, $\lambda _{(0,0)}=\lambda _{(0)}$, and then take
$u_{(0,1)}$ in the set $\{u _{j}\textrm{  ; } j\geq 1\}$ and
$\lambda_{(0,1)}$ in the set $\{\lambda _{j}\textrm{  ; } j\geq 1\}$ so that $$||u _{(0,0)}-u_{(0,1)}||<2^{-1}||u_{(0)}-u_{(1)}||<2^{-2} \textrm{ and }|\lambda  _{(0,0)}-\lambda _{(0,1)}|<2^{-1}|\lambda _{(0)}-\lambda  _{(1)}|<2^{-2},$$ 
with $\lambda _{(0,1)}\not =\lambda _{(0,0)}$.
In the same way we take $u_{(1,0)}=u_{(1)}$ and $\lambda _{(1,0)}=\lambda _{(1)}$
and then choose $u_{(1,1)}$ and $\lambda _{(1,1)}$ very close to $u_{(1,0)}$  and $\lambda _{(1,0)}$ respectively so that  $$||u _{(1,0)}-u_{(1,1)}||<2^{-1}||u_{(0)}-u _{(1)}||<2^{-2} \textrm{ and } |\lambda _{(1,0)}-\lambda _{(1,1)}|<2^{-1}|\lambda _{(0)}-\lambda _{(1)}|<2^{-2},$$ with $\lambda _{(1,1)}$ not belonging to the set $\{\lambda _{(0,0)}, \lambda _{(0,1)}, \lambda _{(1,0)}\}$.
\par\smallskip
$\bullet $ Step $n$: we take
$u_{(s_{1},\ldots, s_{n-1},0)}=u _{(s_{1},\ldots, s_{n-1})}$ and  $\lambda _{(s_{1},\ldots, s_{n-1},0)}=\lambda _{(s_{1},\ldots, s_{n-1})}$, and then $u _{(s_{1},\ldots, s_{n-1},1)}$ very close to $u _{(s_{1},\ldots, s_{n-1},0)}$ and
$\lambda _{(s_{1},\ldots, s_{n-1},1)}$ very close to $\lambda _{(s_{1},\ldots, s_{n-1},0)}$,
so that
$$||u_{(s_{1},\ldots, s_{n-1},0)}-u _{(s_{1},\ldots, s_{n-1},1)}||<2^{-1}||u _{(s_{1},\ldots, s_{n-2},0)}-u _{(s_{1},\ldots, s_{n-2},1)}||<2^{-n}$$ and
$$|\lambda _{(s_{1},\ldots, s_{n-1},0)}-\lambda _{(s_{1},\ldots, s_{n-1},1)}|<2^{-1}|\lambda _{(s_{1},\ldots, s_{n-2},0)}-\lambda _{(s_{1},\ldots, s_{n-2},1)}|<2^{-n}.$$ We manage the construction in such a way that for all finite sequences $(s_{1},\ldots, s_{n})$ of $2^{\omega }$ of length $n$, the numbers $\lambda _{(s_{1},\ldots, s_{n})}$ are distinct.
\par\smallskip
This defines $\lambda _{s} $ and $u_{s}$ for $s\in 2^{<\omega }$. If now $s=(s_{1}, s_{2},\ldots)\in 2^{\omega }$ is an infinite sequence of $0$'s and $1$'s, we define $\lambda _{s}=\lim_{n\rightarrow+\infty }\lambda _{s_{|n}}$ and $u_{s}=\lim_{n\rightarrow+\infty }u_{s_{|n}}$, where $s_{|n}=(s_{1},\ldots, s_{n})$. These two limits do exist: indeed we have for any 
$n\geq 1$ that $|\lambda _{s_{|n-1}}-\lambda _{s_{|n}}|<2^{-n}$ and $||u_{s_{|n-1}}-u_{s_{|n}}||<2^{-n}$.
\par\smallskip
Let $\phi_{i}:2^{\omega }\rightarrow \T$ be the map defined by $\phi_{i}(s)=\lambda _{s}$. It is continuous and injective: if $s\not=s'$ are two distinct elements of $2^{\omega }$, and $p$ is the smallest integer such that $s_{n}\not=s'_{n}$ for any $n<p$, then for any $n\geq p$ we have
\begin{eqnarray*}
 |\lambda _{(s_{1},\ldots, s_{n})}-\lambda _{(s'_{1},\ldots, s'_{n})}|&\geq& |\lambda _{(s_{1},\ldots, s_{p-1}, s_{p})}-\lambda _{(s_{1},\ldots, s_{p-1}, s'_{p})}|\\&-&\sum_{m=p+1}^{n}|\lambda _{(s_{1},\ldots, s_{m})}-\lambda _{(s_{1},\ldots, s_{m-1})}|\\&-&\sum_{m=p+1}^{n}|\lambda _{(s'_{1},\ldots, s'_{m})}-\lambda _{(s'_{1},\ldots, s'_{m-1})}|\\
&\geq&
 |\lambda _{(s_{1},\ldots, s_{p-1}, s_{p})}-\lambda _{(s_{1},\ldots, s_{p-1}, s'_{p})}|\\&-&\left(\sum_{m=p+1}^{n}2^{-(m-p)}\right)
 |\lambda _{(s_{1},\ldots, s_{p-1}, s_{p})}-\lambda _{(s_{1},\ldots, s_{p-1}, s'_{p})}|\\
 &\geq&
 2^{-1} |\lambda _{(s_{1},\ldots, s_{p-1}, s_{p})}-\lambda _{(s_{1},\ldots, s_{p-1}, s'_{p})}|=\delta _{p}>0.
\end{eqnarray*}
It follows that $|\lambda _{s}-\lambda _{s'}|\geq\delta _{p}>0$, hence that $\lambda _{s}\not=\lambda_{s'}$, and $\phi_{i}$ is injective. We set $K_{i}=\phi_{i}(2^{\omega })$, and with this definition $K_{i}$ is a compact set homeomorphic to the Cantor set $2^{\omega }$ via the map $\phi_{i}$. Let now $E_{i}:K_{i}\rightarrow X$ be defined by $E_{i}(\lambda _{s})=u_{s}$: $E_{i}$ can be written as $E_{i}=\psi_{i}\circ\phi_{i}^{-1}$, where $\psi_{i}:2^{\omega }\rightarrow X$, $\psi_{i}(s)=u_{s}$. By the same argument as above $\psi_{i}$ is continuous on $2^{\omega }$, and since $\phi_{i}$ is an homeomorphism from $2^{\omega }$ onto $K_{i}$, $E_{i}$ is a continuous map from $K_{i}$ into $X$. Lastly $\phi_{i}({(0,0,\ldots)})=\lambda _{i}$ belongs to $K_{i}$,
and $E_{i}(\lambda _{i})=u_{i}$ so that $\textrm{sp}[E_{i}(\lambda ) \textrm{ ; } i\geq 1, \lambda \in K_{i}]$ is dense in $X$. Thus assertion (3)  is satisfied, and this finishes the proof of Theorem \ref{prop0}.
\end{proof}

The proof of Theorem \ref{prop0} actually yields the following result, which gives a rather weak condition for an operator to have perfectly spanning unimodular \eve s:

\begin{theorem}\label{prop0bis}
 Let $X$ be a complex \sep\ infinite-dimensional Banach space, and let $T$ be a bounded \op\ on $X$. Suppose that there exists a sequence $(u_{i})_{i\geq 1}$ of vectors of $X$ having the following properties:
 \begin{itemize}
  \item[(a)] for each $i\geq 1$, $u_{i}$ is an \eve\ of $T$ associated to an \eva\ $\lambda _{i}$ of $T$ with $|\lambda _{i}|=1$ and the $\lambda _{i}$'s all distinct;
  
  \item[(b)] $\textrm{sp}[u_{i}\textrm{ ; } i \geq 1]$ is dense in $X$;
  
  \item[(c)] for any $i\geq 1$ and any $\varepsilon >0$, there exists an $n\not =i $ such that $||u_{n}-u_{i}||<\varepsilon $.
 \end{itemize}
Then $T$ has a perfectly spanning set of unimodular \eve s, and hence $T$ is \fhy.
\end{theorem}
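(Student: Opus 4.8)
The plan is to verify that hypotheses (a), (b), (c) are exactly what is needed to run the Cantor tree construction carried out in the implication $(1)\implies(3)$ of the proof of Theorem \ref{prop0}; this yields the Cantor sets $K_{i}$ and the continuous \eve\ fields $E_{i}$ of assertion (3) of that theorem, so that $T$ has perfectly spanning unimodular \eve s and is therefore \fhy\ by Theorem \ref{th1bis}. We may assume $||u_{i}||=1$ for all $i$: normalizing the $u_{i}$ affects neither (a) nor (b), and (c) is preserved because $w\mapsto w/||w||$ is continuous away from $0$.

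First I would record the elementary estimate that norm-closeness of unit \eve s forces closeness of the associated \eva s: if $Tu=\lambda u$ and $Tv=\mu v$ with $||u||=||v||=|\lambda|=|\mu|=1$, then $\lambda u-\mu u=T(u-v)-\mu(u-v)$, whence $|\lambda-\mu|\le(||T||+1)\,||u-v||$. Consequently, any vector of the sequence $(u_{n})$ lying close enough to a given $u_{i}$ automatically carries an \eva\ as close to $\lambda_{i}$ as we please. Applying (c) to $u_{i}$ along a sequence of radii shrinking to $0$, each one no larger than the distance from $u_{i}$ to the vector produced at the previous stage, I obtain pairwise distinct indices $n_{1},n_{2},\dots$ with $u_{n_{k}}\to u_{i}$; the corresponding \eva s are pairwise distinct by (a) and converge to $\lambda_{i}$. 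Thus near every vector of the sequence there lie infinitely many vectors of the sequence with pairwise distinct \eva s — this is the single point at which (c), which by itself furnishes only one nearby vector, must be combined with (a).

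With this in hand, for each fixed $i$ I would construct, exactly as in the proof of Theorem \ref{prop0}, \eva s $\lambda_{s}$ and unit \eve s $u_{s}$ indexed by $s\in 2^{<\omega}$: starting from $u_{(0)}=u_{i}$, $\lambda_{(0)}=\lambda_{i}$, always setting $u_{(s,0)}=u_{s}$ and $\lambda_{(s,0)}=\lambda_{s}$, and choosing $u_{(s,1)}$ among the $u_{n}$ so close to $u_{(s,0)}$ that $||u_{(s,0)}-u_{(s,1)}||<2^{-n}$ and $|\lambda_{(s,0)}-\lambda_{(s,1)}|$ is simultaneously $<2^{-n}$ and less than half the corresponding gap at the predecessor node, while — and here the preceding paragraph is used — keeping $\lambda_{(s,1)}$ distinct from the finitely many \eva s already attached to nodes of the same length. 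Passing to the limit along each branch $s\in 2^{\omega}$, the sequences $\lambda_{s_{|n}}$ and $u_{s_{|n}}$ converge (geometric control) to $\lambda_{s}$ and $u_{s}$; continuity of $T$ gives $Tu_{s}=\lambda_{s}u_{s}$ and $||u_{s}||=1$, the map $\phi_{i}(s)=\lambda_{s}$ is continuous and injective by the same telescoping lower bound as in Theorem \ref{prop0}, so $K_{i}:=\phi_{i}(2^{\omega})$ is homeomorphic to $2^{\omega}$, and $E_{i}:=\psi_{i}\circ\phi_{i}^{-1}$ with $\psi_{i}(s)=u_{s}$ (continuous, like $\phi_{i}$) is a continuous map $K_{i}\to S_{X}$ with $TE_{i}(\lambda)=\lambda E_{i}(\lambda)$. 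Finally $u_{i}=E_{i}(\lambda_{i})$, so by (b) $\textrm{sp}[E_{i}(\lambda)\textrm{ ; } i\ge1,\ \lambda\in K_{i}]\supseteq\textrm{sp}[u_{i}\textrm{ ; } i\ge1]$ is dense in $X$; this is assertion (3), and Theorem \ref{th1bis} then gives that $T$ is \fhy.

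The only delicate point — and it is a mild one — is realizing, at every node of the tree, the three constraints at once: proximity to the parent vector, geometric shrinking of the branching \eva\ gap, and a \emph{fresh} \eva\ avoiding the finitely many already used on the current level. Hypothesis (c) alone gives only a single nearby vector and would not suffice; combined with the distinctness in (a) it supplies infinitely many nearby vectors of the sequence with pairwise distinct \eva s, so a suitable fresh one always exists. Everything else is a verbatim transcription of the proof of Theorem \ref{prop0}.
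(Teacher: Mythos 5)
Your proposal is correct and follows essentially the paper's intended argument: the paper proves Theorem \ref{prop0bis} precisely by observing that hypotheses (a)--(c) are what is needed to run the Cantor tree construction from the implication $(1)\implies(3)$ of Theorem \ref{prop0}, obtaining assertion (3), hence perfect spanning and frequent hypercyclicity via Theorem \ref{th1bis}. Your added details (normalizing the $u_{i}$, the estimate $|\lambda-\mu|\leq(\|T\|+1)\|u-v\|$, and iterating (c) together with (a) to get infinitely many nearby vectors with pairwise distinct eigenvalues so that fresh eigenvalues can always be chosen) are accurate fillings-in of exactly the points the paper leaves implicit.
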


In particular $T$ is \fhy\ as soon as the following assumption (H') holds true:
\par\medskip
\emph{There exists a sequence $(x_{n})_{n\geq 1}$ of \eve s of $T$, $Tx_{n}=\lambda _{n}x_{n}$, $|\lambda _{n}|=1$, $||x_{n}||=1$, having the following properties:}
\begin{itemize}
\item[(2)] \emph{${\textrm{sp}}[x_{n}\textrm{ ; } n\geq 1]$ is dense in $X$;}

\item[(3)] \emph{for any finite subset $F$ of $\sigma  _{p}(T)\cap\T$ we have $\overline{\{x_{n} \textrm{ ; } n\geq 1\}}=\overline{\{x_{n} \textrm{ ; } n\in A_{F}\}}$, where $A_{F}=\{n\geq 0 \textrm{ ; } \lambda _{n}\not \in F\}$.}
\end{itemize}

Asumption (H') is nothing else than Assumption (H) without its first condition (1). Observe that we have proved that Assumptions (H) and (H') are again both equivalent to the fact that $T$ has perfectly spanning unimodular \eve s.

\section{Ergodicity of operators with perfectly spanning unimodular eigenvectors}
Although we now know that any \op\ on a separable Banach space with perfectly spanning unimodular eigenvectors is \fhy, we still do not know whether such an \op\ admits a \nd\ \inv\ \ga\ \mea\ with respect to which it is \erg. This question was mentioned in \cite{BG3}. Some examples seem to point out that the answer to this question should be negative, but so far no counter-example has been constructed. In this context it is interesting to note the following:

\begin{theorem}\label{th4}
If $T$ is a bounded operator on $X$ which has spanning unimodular eigenvectors, then $T$ is not \erg\ \wrt\  the \inv\ \nd\ \mea\ $m$ constructed in the proof of Theorem \ref{th1bis}. More generally, $T$ will never be \erg\ \wrt\ a  \mea\ associated to a random function
$$\Phi(\omega )=\sum_{n=1}^{+\infty }\chi_{n}(\omega )x_{n}$$ where the $x_{n}$'s are spanning eigenvectors of $T$ associated to a family of unimodular eigenvalues $\lambda _{n}$ and $(\chi_{n})_{n\geq 1}$ a sequence of independent rotation-invariant variables such that $\E(\chi_{n})=0$ and $\E(|\chi_{n}|^{2})=1$.
\end{theorem}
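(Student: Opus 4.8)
The plan is to argue by contradiction. Suppose that $T$ is ergodic with respect to such a measure $m$; I will produce a non-constant $T$-invariant element of $L^{2}(X,m)$ by applying the ergodic theorem to a suitable \emph{quadratic} observable and reading off the resulting time averages along the trajectory of $\Phi(\omega)$. Fix $x^{*}\in X^{*}$ and write $c_{n}=\langle x^{*},x_{n}\rangle$, so that $\langle x^{*},\Phi\rangle=\sum_{n}\chi_{n}c_{n}$ is an a.e.\ convergent series of independent symmetric variables with $\sum_{n}|c_{n}|^{2}=\E|\langle x^{*},\Phi\rangle|^{2}<+\infty$. By a Khinchine-type inequality for sums of independent rotation-invariant variables (or, for general $\chi_{n}$ with no fourth moment, after a routine truncation), $\langle x^{*},\Phi\rangle\in L^{4}(\Omega)$, so $f(x):=|\langle x^{*},x\rangle|^{2}$ lies in $L^{2}(X,m)$ and $\int_{X}f\,dm=\sum_{n}|c_{n}|^{2}$. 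If $T$ were ergodic, the mean ergodic theorem would give that $\frac1N\sum_{l<N}f\circ T^{l}$ converges in $L^{2}(X,m)$ to the constant $\int_{X}f\,dm$, hence $\E_{m}[L^{2}]=(\int_{X}f\,dm)^{2}$, where $L=\lim_{N}\frac1N\sum_{l<N}f\circ T^{l}$.

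The next step is to compute $\E_{m}[L^{2}]$ through the correlations $c(h)=\int_{X}(f\circ T^{h})\,f\,dm=\E\big[|\langle x^{*},T^{h}\Phi\rangle|^{2}\,|\langle x^{*},\Phi\rangle|^{2}\big]$, using $\E_{m}[L^{2}]=\lim_{N}\frac1N\sum_{|h|<N}(1-\tfrac{|h|}{N})c(h)$. Expanding $\langle x^{*},T^{h}\Phi\rangle=\sum_{n}\lambda_{n}^{h}\chi_{n}c_{n}$ and using independence of the $\chi_{n}$ together with $\E\chi_{n}=0$, $\E\chi_{n}^{2}=0$ (rotation invariance) and $\E|\chi_{n}|^{2}=1$, the only surviving contractions give
$$c(h)=\sum_{n}\E|\chi_{n}|^{4}\,|c_{n}|^{4}\;+\;\sum_{n\neq k}|c_{n}|^{2}|c_{k}|^{2}\;+\;\sum_{n\neq k}(\lambda_{n}\overline{\lambda_{k}})^{h}\,|c_{n}|^{2}|c_{k}|^{2}.$$
Dominated convergence (using $\big|\frac1N\sum_{h<N}(\lambda_{n}\overline{\lambda_{k}})^{h}\big|\le 1$ and the summability of $|c_{n}|^{2}|c_{k}|^{2}$) turns the Ces\`aro average of the last sum into $\sum_{n\neq k,\;\lambda_{n}=\lambda_{k}}|c_{n}|^{2}|c_{k}|^{2}$. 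Subtracting $\big(\int_{X}f\,dm\big)^{2}=\sum_{n}|c_{n}|^{4}+\sum_{n\neq k}|c_{n}|^{2}|c_{k}|^{2}$ and invoking ergodicity, we obtain, for every $x^{*}\in X^{*}$,
$$\sum_{n}\big(\E|\chi_{n}|^{4}-1\big)|c_{n}|^{4}\;+\;\sum_{\substack{n\neq k\\ \lambda_{n}=\lambda_{k}}}|c_{n}|^{2}|c_{k}|^{2}\;=\;0.$$

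Since $\E|\chi_{n}|^{4}\ge(\E|\chi_{n}|^{2})^{2}=1$ by Cauchy--Schwarz, every term above is nonnegative, so each one vanishes. As the $x_{n}$ span a dense subspace of $X$, for each $n$ there is an $x^{*}$ with $c_{n}\neq0$; therefore $\E|\chi_{n}|^{4}=1$, i.e.\ $|\chi_{n}|$ is a.s.\ constant and $\chi_{n}$ is a Steinhaus variable, and moreover the $\lambda_{n}$ must be pairwise distinct (two distinct, non-proportional eigenvectors with a common eigenvalue would, for some $x^{*}$ annihilating neither, make the second sum strictly positive). It then remains to see that this situation is incompatible with the measures under consideration, in particular with the one built in the proof of Theorem~\ref{th1bis}: there the eigenvectors occur in clusters $x_{j}^{(k)}$ ($j\in J_{k}^{n}$), all lying within $\gamma_{n}$ of a fixed $x_{k}$, with $\sum_{j}|a_{j}^{(k)}|^{2}$ arbitrarily small but $\sum_{j}|a_{j}^{(k)}|$ large, so that the ``effective'' coefficient $\beta_{k}=\sum_{j\in J_{k}^{n}}\chi_{j}^{(k)}a_{j}^{(k)}$ satisfies $\E|\beta_{k}|^{4}>(\E|\beta_{k}|^{2})^{2}$; rerunning the quadratic computation above for the coarsened representation $\Phi\approx\sum_{k}\beta_{k}x_{k}$ then contradicts the conclusion that every coefficient is Steinhaus. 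The main obstacle is precisely this last step: one must check that the quadratic ergodic average is insensitive to replacing a cluster by its effective coefficient — the delicate point being uniform control of the Ces\`aro averages of $(\lambda_{n}\overline{\lambda_{k}})^{h}$ when many eigenvalues $\lambda_{j}^{(k)}$ are mutually very close — and that for the measure of Theorem~\ref{th1bis} the effective coefficients genuinely fail to be Steinhaus.
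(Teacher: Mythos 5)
Your computation is, in spirit, exactly the test the paper itself runs --- Ces\`aro averages of correlations of the quadratic observables $|\langle x^{*},\cdot\rangle|^{2}$ --- but you do the fourth-moment bookkeeping correctly where the paper does not. In the paper's evaluation of $\langle U_{T}^{n}|x^{*}|^{2},|y^{*}|^{2}\rangle$, the two admissible pairings $\{p_{1}=p_{2},\,q_{1}=q_{2}\}$ and $\{p_{1}=q_{2},\,p_{2}=q_{1}\}$ are summed separately, which double-counts the full diagonal and implicitly sets $\E|\chi_{p}|^{4}=2$ (the complex Gaussian value); for Steinhaus variables $\E|\chi_{p}|^{4}=1$, and the correct identity carries the extra term $\sum_{p}(\E|\chi_{p}|^{4}-2)|\langle x^{*},x_{p}\rangle|^{2}|\langle y^{*},x_{p}\rangle|^{2}$. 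Your defect formula $\sum_{n}(\E|\chi_{n}|^{4}-1)|c_{n}|^{4}+\sum_{n\neq k,\,\lambda_{n}=\lambda_{k}}|c_{n}|^{2}|c_{k}|^{2}$ is right (sanity check with two Steinhaus terms and $c_{1}=c_{2}=1$: the true value of $\E|\chi_{1}+\chi_{2}|^{4}$ is $6$, matching your formula at $h=0$, not the paper's $8$). The trouble is that this defect vanishes precisely in the case the theorem is meant to cover --- Steinhaus coefficients and pairwise distinct eigenvalues, which is exactly the measure built in the proof of Theorem \ref{th1bis} --- so the quadratic test detects nothing, and the ``effective cluster coefficient'' step you propose to finish with is a genuine gap: $T$ does not act on the coarsened representation $\sum_{k}\beta_{k}x_{k}$ by multiplying each $\beta_{k}$ by a single unimodular scalar, so the correlation computation cannot be rerun for the $\beta_{k}$, and no uniform control of the Ces\`aro averages can manufacture a nonzero defect that the exact computation shows to be zero.

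Moreover the gap cannot be filled, because the conclusion is false for these measures. When the $\chi_{n}$ are Steinhaus and the arguments of the distinct $\lambda_{n}$ are, together with $1$, linearly independent over $\Q$ (as assumption (H)(1) guarantees for the construction of Theorem \ref{th1bis}), the a.e.-defined map $\Theta:(\omega_{n})\mapsto\sum_{n}\omega_{n}x_{n}$ from $\T^{\N}$ with Haar measure into $X$ satisfies $T\circ\Theta=\Theta\circ R$, where $R$ is the rotation of $\T^{\N}$ by $(\lambda_{n})$. Since no nontrivial character of $\T^{\N}$ is fixed by $R$, the rotation $R$ is ergodic, and $m=\Theta_{*}(\mathrm{Haar})$ makes $(X,m,T)$ a factor of $(\T^{\N},\mathrm{Haar},R)$; hence every $T$-invariant Borel set has $m$-measure $0$ or $1$ and $T$ \emph{is} ergodic with respect to $m$. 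So your careful version of the moment computation has in fact located the error in the paper's own proof (the missing $\E|\chi_{p}|^{4}$ term), and Theorem \ref{th4} as stated fails for the Steinhaus measures of Theorem \ref{th1bis}; this is consistent with subsequent work showing that operators with perfectly spanning unimodular eigenvectors do admit invariant ergodic measures of precisely this type. In summary: your derivation up to the displayed identity is correct and sharper than the paper's, the cluster step is a real gap, and the reason it cannot be closed is that the statement itself is not true.
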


These \inv\  measures are in a sense the ``trivial'' ones, i.e. the ones which can be constructed without any additional assumption on the eigenvectors of $T$ (the existence of such an \inv\ \mea\ does not even imply that $T$ is \hy). When the \op\ $T$ has perfectly spanning unimodular eigenvectors with respect to a certain continuous measure $\sigma  $ on $\T$, the \mea s which are used in \cite{BG2} and \cite{BG3} to obtain ergodicity results are intrinsically different from these ones.

\begin{proof}
Let $U_{T}$ denote the isometric \op\ defined on $L^{2}(X,\mathcal{B}, m)$ by $U_{T}f=f\circ T$, $f\in L^{2}(X,\mathcal{B},m)$. If $x^{*}$ and $y^{*}$ are two elements of $X^{*}$, they belong to $L^{2}(X,\mathcal{B}, m)$. For any $n\geq 0$ we have
\begin{eqnarray*}
\langle U_{T}^{n}|x^{*}|^{2}\,,\, |y^{*}|^{2}\rangle &=&
\int_{X}|\pss{x^{*}}{T^{n}x} \, {\pss{y^{*}}{x}}|^{2}dm(x)\\
&=& \int_{\Omega }|\sum_{p\geq 0}\chi_{p}(\omega )\lambda _{p}^{n}\pss{x^{*}}{x_{p}}\,.\, \sum_{q\geq 0}{\chi_{q}(\omega )} {\pss{y^{*}}{x_{q}}}|^{2}d\P(\omega )\\
&=&  \sum_{p_{1},p_{2},q_{1},q_{2}\geq 0}
I_{p_{1},p_{2},q_{1},q_{2}}
\lambda _{p_{1}}^{n}\overline{\lambda }_{p_{2}}^{n}\pss{x^{*}}{x_{p_{1}}}
\overline{\pss{x^{*}}{x_{p_{2}}}} {\pss{y^{*}}{x_{q_{1}}}} \overline{\pss{y^{*}}{x_{q_{2}}}}
\end{eqnarray*}
where
$$I_{p_{1},p_{2},q_{1},q_{2}}=\int_{\Omega }\chi_{p_{1}}(\omega )\overline{\chi_{p_{2}}(\omega )}{\chi_{q_{1}}(\omega )}\overline{\chi_{q_{2}}(\omega )}.$$
Now $I_{p_{1},p_{2},q_{1},q_{2}}$ is non-zero if and only if $p_{1}=p_{2}$ and $q_{1}=q_{2}$ or $p_{1}=q_{2}$ and $p_{2}=q_{1}$. Thus $\langle U_{T}^{n}|x^{*}|^{2}\,,\, |y^{*}|^{2}\rangle $ is equal to
\begin{eqnarray*}
\sum_{p_{1},q_{1}\geq 0}|\pss{x^{*}}{x_{p_{1}}}|^{2}|\pss{y^{*}}{x_{q_{1}}}|^{2}+\sum_{p_{1},p_{2}\geq 0}\lambda _{p_{1}}^{n}\overline{\lambda} _{p_{2}}^{n} \pss{x^{*}}{x_{p_{1}}}
\overline{\pss{x^{*}}{x_{p_{2}}}} \overline{\pss{y^{*}}{x_{p_{1}}}} \pss{y^{*}}{x_{p_{2}}}\\
=\sum_{p\geq 0}|\pss{x^{*}}{x_{p}}|^{2}\,.\,\sum_{p\geq 0}|\pss{y^{*}}{x_{p}}|^{2}+
|\sum_{p\geq 0}\lambda _{p}^{n} \pss{x^{*}}{x_{p}}\overline{\pss{y^{*}}{x_{p}}}|^{2}.
\end{eqnarray*}
Consider now the Ces\`{a}ro sums
$$\frac{1}{N}\sum_{n=0}^{N-1}\langle U_{T}^{n}|x^{*}|^{2}\,,\, |y^{*}|^{2}\rangle=
\sum_{p\geq 0}|\pss{x^{*}}{x_{p}}|^{2}\,.\,\sum_{p\geq 0}|\pss{y^{*}}{x_{p}}|^{2}+
\frac{1}{N}\sum_{n=0}^{N-1}
|\sum_{p\geq 0}\lambda _{p}^{n} \pss{x^{*}}{x_{p}}\overline{\pss{y^{*}}{x_{p}}}|^{2}.
 $$
If $T$ were \erg\ \wrt\ $m$, this quantity would tend to 
$$\int_{X} |\pss{x^{*}}{x}|^{2}dm(x)\,.\, \int_{X} |\pss{y^{*}}{x}|^{2}dm(x)
=\sum_{p\geq 0}|\pss{x^{*}}{x_{p}}|^{2}\,.\,\sum_{p\geq 0}|\pss{y^{*}}{x_{p}}|^{2}
$$ as $N$ tends to infinity (see for instance \cite{W} for this standard characterization of ergodicity).
Hence
$$\frac{1}{N}\sum_{n=0}^{N-1}
|\sum_{p\geq 0}\lambda _{p}^{n} \pss{x^{*}}{x_{p}}\overline{\pss{y^{*}}{x_{p}}}|^{2}
 $$ would tend to zero as $N$ tends to infinity. This would imply that
$$|\sum_{p\geq 0}\lambda _{p}^{n} \pss{x^{*}}{x_{p}}\overline{\pss{y^{*}}{x_{p}}}|^{2}
$$ tends to zero as $n$ tends to infinity along a set $D$ of density $1$ (see again \cite{W}). We are going to show that it is not the case if $x^{*}$ is such that $|\pss{x^{*}}{x_{0}}|^{2}=\varepsilon >0$ and $y^{*}=x^{*}$. Since the series $\sum_{p\geq 0}|\pss{x^{*}}{x_{p}}|^{2}$ is convergent, there exists a $p_{0}$ such that for any $n\geq 0$
$$|\sum_{p>p_{0}} \lambda _{p}^{n}|\pss{x^{*}}{x_{p}}|^{2}|<\varepsilon .$$ Hence
$$|\sum_{p\geq 0} \lambda _{p}^{n}|\pss{x^{*}}{x_{p}}|^{2}|\geq |\sum_{p\leq p_{0}} \lambda _{p}^{n}|\pss{x^{*}}{x_{p}}|^{2}|-\varepsilon $$ for any $n\geq 0$. Now for any $\delta >0$
the set $D_{\delta }=\{n\geq 0\textrm{ ; for every } p\leq p_{0} \;
|\lambda _{p}^{n}-1|<\delta \}$ has positive lower density $d_{\delta }$. For any $n\in D_{\delta }$,
$$|\sum_{p\leq p_{0}}\lambda _{p}^{n} |\pss{x^{*}}{x_{p}}|^{2}| \geq \sum_{p\leq p_{0}}
|\pss{x^{*}}{x_{p}}|^{2}-\delta \sum_{p\leq p_{0}}
|\pss{x^{*}}{x_{p}}|^{2}$$ 
so that if $\delta $ is small enough,
$$|\sum_{p\geq 0}\lambda _{p}^{n} |\pss{x^{*}}{x_{p}}|^{2}|^{2}\geq \sum_{p\leq p_{0}}
|\pss{x^{*}}{x_{p}}|^{2}-2\varepsilon \geq |\pss{x^{*}}{x_{0}}|^{2}-2\varepsilon \geq \varepsilon .$$
Hence $$\frac{1}{N}\# \{n\leq N \textrm{ ; }|\sum_{p\geq 0}\lambda _{p}^{n} |\pss{x^{*}}{x_{p}}|^{2}|^{2}\geq\varepsilon \}\geq \frac{1}{2}d_{\delta }$$ for $N$ large enough, so that
$$\frac{1}{N}\# \{n\leq N \textrm{ ; }|\sum_{p\geq 0}\lambda _{p}^{n} |\pss{x^{*}}{x_{p}}|^{2}|^{2}<\varepsilon \}\leq (1-\frac{1}{2}d_{\delta }).$$ Thus 
$$|\sum_{p\geq 0}\lambda _{p}^{n} \pss{x^{*}}{x_{p}}\overline{\pss{y^{*}}{x_{p}}}|^{2}
$$ does not tend to zero along a set of density $1$. This contradiction shows that $T$ is not \erg\ \wrt\ $m$.
\end{proof}

\section{Open questions and remarks}

\subsection{Hypercyclic operators with spanning unimodular eigenvectors}
Let $T$ be a bounded \hy\ \op\ on $X$ whose \eve s associated to \eva s of modulus $1$ span a dense subspace of $X$. It is still an open question to know whether such an \op\  must be \fhy. If $T$ is a chaotic \op\ (i.e. a \hy\ \op\ which has a dense set of periodic points), then $T$ falls into this category of \ops: $T$ is chaotic \ifff\ it is \hy\ and its \eve s associated to \eva s which are $n^{th}$ roots of $1$ span a dense subspace of $X$. Thus the following question of \cite{BG1} is still unanswered: 

\begin{question}\cite{BG1}
Must a chaotic \op\ be \fhy?
\end{question}
\par\smallskip
It is an intriguing fact that all \ops\ which are known to be \hy\ and to have spanning unimodular \eve s have in fact perfectly spanning unimodular \eve s. Hence a natural way to prove (or disprove) the conjecture that all \hy\ \ops\ with
spanning unimodular \eve s are \fhy\ would be to answer the following question:

\begin{question}\label{q2}
If  $T$ is a \hy\ \op\ on $X$ whose \eve s associated to \eva s of modulus $1$ span a dense subspace of $X$, is is true that the unimodular \eve s of $T$ are perfectly spanning?
\end{question}

A related question of \cite{Fl} is interesting in this context:

\begin{question}\cite{Fl}\label{q22}
Does there exist a bounded \hy\ \op\ $T$ on $X$ whose unimodular point spectrum consists of a countable set $\{\lambda _{n} \textrm{ ; } n\geq 1\}$, and which is such that the \eve s associated to the \eva s $\lambda _{n}$ span a dense subspace of $X$?
\end{question}

\subsection{Existence of frequently hypercyclic and chaotic operators on complex Banach spaces with an unconditional Schauder decomposition}

Let $X$ be a complex separable infinite-dimensional Banach space $X$ with an unconditional Schauder decomposition.
This means that there exists a sequence $(X_{n})_{n\geq 0}$ of closed subspaces of $X$ such that any $x\in X$ can be written in a unique way as an unconditionnally convergent series $x=\sum_{n\geq 0}x_{n}$, where $x_{n}$ belongs to $X_{n}$ for any $n\geq 0$, and there is no loss of generality in supposing that all the subspaces $X_{n}$ are infinite-dimensional. The main result of \cite{DFGP} states that there exists a bounded \op\ on $X$ which is \fhy\ and chaotic.
This result was motivated by the fact that  any infinite-dimensional Banach space supports a \hy\ \op\ (\cite{A}, \cite{B}), but that the corresponding statement for \fhy\ \ops\ is not true \cite{S}: if $X$ is a \sep\ complex hereditarily indecomposable space (like the space of Gowers and Maurey \cite{GM}), then there is no \fhy\ \op\ on $X$. Recall that a Banach space $X$ is said to be hereditarily indecomposable if no pair of 
closed infinite-dimensional subspaces
$Y$ and $Z$ of $X$ form a topological direct sum $Y\oplus Z$. Also \cite{BMP} there are no chaotic \ops\ on a complex hereditarily indecomposable Banach space.
The \ops\ constructed in \cite{DFGP} are perturbations of a diagonal \op\ with unimodular coefficients by a vector-valued nuclear backward shift. In \cite{DFGP} we first construct such \ops\ on a Hilbert space, prove that they have perfectly spanning unimodular \eve s, and then transfer them to our Banach space $X$.
 This result can also be obtained as a consequence of Theorem \ref{th1bis}: the \eve s can be directly computed, and if at each step of the constuction we take the perturbation of the diagonal coefficients to be small enough, the \op\ satisfies assumption (H). 
The proof of \cite{DFGP} is, however, much simpler.

\end{document}